\newtheorem{theorem}{Theorem}[section]
\newtheorem{proposition}[theorem]{Proposition}
\newtheorem{lemma}[theorem]{Lemma}
\newcommand{\st}{\  : \ }
\newcommand{\e}{\varepsilon}
\newcommand{\R}{\mathbf{R}}
\newcommand{\C}{\mathbf{C}}
\newcommand{\cH}{\mathcal{H}}
\newcommand{\cR}{\mathcal{R}}
\newcommand{\D}{\mathcal{D}}
\newcommand{\iy}{\infty}
\DeclareMathOperator{\E}{\mathbf{E}}
\DeclareMathOperator{\Var}{\mathbf{Var}}
\DeclareMathOperator{\tr}{Tr}
\renewcommand{\P}{\mathbf{P}}
\newcommand{\Id}{\mathrm{Id}}
\newcommand{\ketbra}[2]{| #1 \rangle \langle #2 |}
\newcommand{\ket}[1]{| #1 \rangle}
\renewcommand{\leq}{\leqslant}
\renewcommand{\geq}{\geqslant}
\begin{document}

\title{Realigning random states}

\author{Guillaume Aubrun}
\address{G.A.: Institut Camille Jordan, Universit\'e Claude Bernard Lyon 1, 43 boulevard du 11 novembre 1918, 69622 Villeurbanne CEDEX, France {\tt aubrun@math.univ-lyon1.fr}}

\author{Ion Nechita}
\address{I.N.: CNRS, Laboratoire de Physique Th\'eorique, IRSAMC, Universit\'e de Toulouse, UPS, 31062 Toulouse, France. {\tt nechita@irsamc.ups-tlse.fr}}

\subjclass[2000]{60B20,15B52}
\keywords{quantum entanglement, realignment, random states}

\begin{abstract}

We study how the realignment criterion (also called computable cross-norm criterion) succeeds asymptotically in detecting whether random states are separable or entangled. We consider random states on $\C^d \otimes \C^d$ obtained by partial tracing a Haar-distributed random pure state on $\C^d \otimes \C^d \otimes \C^s$ over an ancilla space $\C^s$. We show that, for large $d$, the realignment criterion typically detects entanglement if and only if $s \leq (8/3\pi)^2 d^2$. In this sense, the realignment criterion is asymptotically weaker than the partial transposition criterion.

\end{abstract}

\maketitle

\section*{Introduction}

A central problem in Quantum Information Theory is to decide whether a state is separable or entangled. Although this is a computationally hard task \cite{gur}, several operational criteria have been proposed to detect entanglement, such as the Peres partial transpose criterion (PPT) \cite{peres} and the realignment criterion due to Rudolph and Chen--Wu \cite{cwu,rud}.

In this paper, we focus on the realignment criterion, study its behaviour on large dimensional bipartite systems and compute the threshold for induced random states. These are random states on $\cH=\C^d \otimes \C^d$ which are obtained as the partial trace over an ancilla space $\C^s$ of a Haar-distributed random pure state on $\cH \otimes \C^s$. We show that a threshold for the realignment criterion occurs at $s_0=(8/3\pi)^2 d^2\approx 0.72d^2$, in the following sense: if the ancilla dimension $s$ is smaller that $s_0$, with large probability the realignment criterion detects that the random state is entangled; when $s$ is larger than $s_0$, the realignment criterion fails. Since the threshold for the PPT criterion is larger (it occurs at $s=4d^2$, see \cite{aub}), this means that the realignment criterion is asymptotically weaker that the Peres criterion.

Here is a more striking illustration: take a random state uniformly distributed (with respect to the Lebesgue measure) on the convex body of all mixed states on $\C^d \otimes \C^d$ (this corresponds to $s=d^2$, see \cite{zs}). Then, with probability tending to $1$ as $d$ tends to infinity, the following occurs: this state is entangled, and its entanglement is detected by the Peres criterion but not by the realignment criterion.

Our proofs are based on a new model in Random Matrix Theory: realigned Wishart matrices. Let $W$ be a Wishart random matrix ($W=XX^*$, where $X$ is a $d^2 \times s$ matrix with i.i.d. Gaussian entries). We study the realignment of the difference $W-s\Id$ (which is a non-Hermitian matrix), and show that the asymptotic singular value distribution is given, under proper normalization, by a quarter circle law.

The fact that the realignment criterion is generically asymptotically weaker that the Peres criterion is illustrated in a qualitative fashion when we focus on unbalanced bipartite systems. We consider induced random states on $\C^{d_1} \otimes \C^{d_2}$ (the dimension of the environment being still $s$). When $d_1$ is fixed and $d_2$ tends to infinity, we show that the threshold for the realignment criterion is  $s=d_1^2$. This is to be compared with the corresponding threshold for the Peres criterion, which is not bounded with respect to $d_2$ (it was shown in \cite{bn1} that this threshold is $s=\alpha(d_1)d_2$, with $\alpha(d_1)=2d_1+2\sqrt{d_1^2-1}$). 

The paper is organized as follows: Section \ref{sec:intro} introduces the background and states the main theorem about
the threshold for the realignment criterion on $\C^d \otimes \C^d$ (Theorem \ref{thm:threshold-realignment}). 
Section \ref{sec:wishart} introduces realigned Wishart matrices, and contains Theorem \ref{theo:quartercircle} about convergence to the quartercircle distribution. Section \ref{sec:from-wishart-to-states} contains a derivation of Theorem \ref{thm:threshold-realignment} from Theorem \ref{theo:quartercircle}. Section \ref{sec:graphical-calculus} introduces the graphical calculus which is used in the proof of Theorem \ref{theo:quartercircle}. In section \ref{sec:moments-wishart} we compute the moments of realigned Wishart matrices, and the proof of Theorem \ref{theo:quartercircle} is completed in Section \ref{sec:proof-moments}. Finally, Section \ref{sec:unbalanced} deals with unbalanced tensor products.

\section{Background and statement of the results}

\label{sec:intro}

\subsection{Permutation criteria}

We consider a bipartite Hilbert space $\cH=\C^{d_1} \otimes \C^{d_2}$. Let $\{e_i\}_{i=1}^{d_1}$ and $\{f_j\}_{j=1}^{d_2}$ be the canonical bases of $\C^{d_1}$ and $\C^{d_2}$. Any operator $A$ on $\cH$ admits a (double-indexed) matrix representation 
\[ A = \sum_{i,k}^{d_1} \sum_{j,l}^{d_2} A_{ij,kl} \ketbra{e_i \otimes f_j}{e_k \otimes f_l} .\]

For every permutation $\sigma$ of the indices $\{i,j,k,l\}$, we can introduce the corresponding reshuffling operation, which maps the matrix $A = (A_{ij,kl})$ to the (possibly non-square) matrix $A^\sigma = (A_{\sigma(i)\sigma(j),\sigma(k)\sigma(l)})$. This operation depends on the particular choices of bases.

If $\rho$ is a pure product state, then $\|\rho^\sigma\|_1 = 1$. Consequently, for any separable state $\rho$, we have 
$\|\rho^\sigma\|_1 \leq 1$. Each permutation yields to an operational separability criterion. Such reshufflings were studied in \cite{hhh}, where it was proved that each of these 24 reshuffling is equivalent to one of the following
\begin{enumerate}
 \item The trivial reshuffling, where $\|A^\sigma\|_1=\|A\|_1$ for every $A$.
 \item The partial transposition \cite{peres}, denoted $A^\Gamma$, which corresponds to swapping the indices $j$ and $l$. This operation is equivalently described as $A^\Gamma = (\Id \otimes T)A$, where $T$ is the usual transposition of $d \times d$ matrices. Note that for a state $\rho$, the condition $\|\rho^\Gamma\|_1 \leq 1$ is equivalent to $\rho^\Gamma \geq 0$, and this is known as the PPT criterion (positive partial transpose).
 \item The realignment \cite{cwu,rud}, denoted $A^R$, which corresponds to swapping the indices $j$ and $k$. We have
 \[ A^R =  \sum_{i,k}^{d_1} \sum_{j,l}^{d_2} A_{ij,kl} \ketbra{e_i \otimes e_k}{f_j \otimes f_l} .\]
The resulting matrix $A^R$ has dimension $d_1^2 \times d_2^2$. The fact that a separable state $\rho$ must satisfy $\|\rho^R\|_1 \leq 1$ is called the {\itshape realignment criterion} or the {\itshape computable cross-norm criterion}.
\end{enumerate}

Except in Section \ref{sec:unbalanced}, we focus on the balanced case ($d_1=d_2=d$). In this case the matrix $A^R$ is square; however note that the realignment of an Hermitian matrix does not produce a Hermitian matrix in general. Note also that $\Id^R = d E$, where $E=E_d$ is the maximally entangled state $E=\ketbra{\psi}{\psi}$, with $\ket{\psi} = \frac{1}{\sqrt{d}} \sum_{i=1}^d \ket{e_i \otimes e_i}$. Obviously, one has also $\Id=(dE)^R$.

It is known \cite{rud} that when $d \geq 3$, neither of the PPT or realignment criteria is stronger (one can find states which violate one criterion and satisfy the other one). In this paper, we show that when $d$ is large, the 
PPT criterion is generically stronger that the realignment criterion.

\subsection{Random states}

We consider the standard model of induced random states. These mixed states are obtained as partial traces (over some environment) of Haar-distributed random pure states. More precisely, we denote by $\mu_{n,s}$ the distribution of the state
\[ \tr_{\C^s} \ketbra{\psi}{\psi} ,\]
where $\psi$ is uniformly distributed on the unit sphere in $\C^n \otimes \C^s$. In the following, we identify $\C^n$ with $\C^d \otimes \C^d$, for $n=d^2$.

When $s \geq n$, the probability measure $\mu_{n,s}$ has a density with respect
to the Lebesgue measure on the set of states on $\C^n$ which has a simple form \cite{zs}
\begin{equation} \label{eq:formula-density} \frac{d\mu_{n,s}}{d \mathrm{vol}}(\rho)
= \frac{1}{Z_{n,s}} (\det \rho)^{s-n} ,\end{equation}
where $Z_{n,s}$ is a normalization factor. Note that formula \eqref{eq:formula-density}
allows to define the measure $\mu_{n,s}$ for every real $s \geq n$, while the partial trace construction makes sense only for integer values of $s$.

The dimension $s$ of the environment can be thought of as a parameter. The resulting mixed state is more likely to be entangled when $s$ is small. On the other hand, if $s \to \iy$, the resulting mixed state converges to the maximally mixed state, which is separable. Therefore, for any separability criterion, we expect a threshold phenomenon---a critical value between the range of $s$ where the criterion is generically true and the range where it is generically false. Known results in this direction include, for a random state $\rho \in \C^d \otimes \C^d$ with distribution $\mu_{d^2,s}$,

\begin{enumerate}
 \item For separability vs entanglement, the threshold is of order $s \approx d^3$. More precisely, $\rho$ is typically entangled when $s \precsim d^3$, and typically  separable when $s \succsim d^3 \log^2 d$ \cite{asy}.
 \item For the PPT criterion, the threshold occurs precisely at $s=4d^2$:  $\rho$ is typically not-PPT when $s \leq 4d^2$ and typically PPT when  $s \geq 4d^2$ \cite{aub}.
\end{enumerate}
(in this context, a property is called typical if the probability that it holds goes to $1$ as $d$ tends to infinity).

We show that the threshold for the realignment criterion is precisely at $s=(8/3\pi)^2d^2$. This is our main theorem. 

\begin{theorem} \label{thm:threshold-realignment}
Denote $\gamma=(8/3\pi)^2 \approx 0.72$. For every $\e>0$, there exist positive constants $c(\e),C(\e)$ such that the following holds. If $\rho$ is a random state on $\C^d \otimes \C^d$ with distribution $\mu_{d^2,s}$, then
\begin{enumerate}
 \item If $s\leq (\gamma-\e)d^2$, then 
 \[ \P( \| \rho^R \|_1 >  1) \geq 1-C(\e) \exp ( -c(\e) \max(s,d^{1/4}) ). \] 
 \item If $s \geq (\gamma+\e)d^2$, then
 \[ \P( \| \rho^R \|_1 \leq 1) \geq 1-C(\e) \exp ( -c(\e)s). \] 
\end{enumerate}
\end{theorem}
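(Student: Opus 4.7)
The plan is to reduce everything to the quartercircle law announced as Theorem~\ref{theo:quartercircle} and to concentration of measure. I first represent $\rho \sim \mu_{d^2,s}$ by a Wishart matrix: $\rho = W/\tr W$, where $W = XX^*$ with $X$ a $d^2 \times s$ matrix of i.i.d.\ complex Gaussians. Since $\E W = s\Id$ and $\Id^R = dE$ (with $E$ the maximally entangled state), one has
\[ W^R = sd\, E + M, \qquad M := (W - s\Id)^R, \]
so that $\rho^R = (sd\, E + M)/\tr W$. The matrix $M$ is precisely the realigned centered Wishart studied in Section~\ref{sec:wishart}.

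By Theorem~\ref{theo:quartercircle}, after rescaling by $d\sqrt{s}$ the empirical singular value distribution of $M$ converges to the quartercircle law on $[0,2]$, whose mean is $\int_0^2 \frac{x}{\pi}\sqrt{4-x^2}\,dx = 8/(3\pi)$. Heuristically this gives $\|M\|_1 \approx (8/3\pi)\, d^3\sqrt{s}$. In the regime $s \asymp d^2$ one has $\|M\|_1 \asymp d^4 \gg sd$, so the triangle inequality yields $\|W^R\|_1 = (1+o(1))\|M\|_1$; combined with $\tr W = (1+o(1))\, sd^2$, this gives $\|\rho^R\|_1 \approx (8/3\pi)\, d/\sqrt{s}$, which crosses $1$ precisely at $s = \gamma d^2$. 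A subtle point is that weak convergence of the spectral measure does not by itself control $\|M\|_1/d^2$, since $x \mapsto x$ is unbounded; I would fill this gap by proving a separate high-probability bound $\|M\|_\infty \leq C\, d\sqrt{s}$ (via a moment argument), providing the uniform integrability needed to pass from the weak limit to $\|M\|_1$.

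For the quantitative probability bounds, I would apply L\'evy's inequality on the unit sphere of $\C^{d^2}\otimes\C^s$ to the function $\psi \mapsto \|\rho^R\|_1$. This map is Lipschitz (realignment preserves the Frobenius norm, the partial trace is contractive, and the Schatten-$1$ versus Schatten-$2$ conversion costs a factor $d$), yielding deviations of order $\exp(-cs)$ at a fixed scale $\e$. This directly handles case (2), where $s \geq (\gamma + \e)d^2$ is already large. For case (1), however, $s$ may be much smaller than $d^2$ and the bound $\exp(-cs)$ becomes too weak; the extra term $d^{1/4}$ in the exponent must come from the \emph{rate} of convergence in Theorem~\ref{theo:quartercircle} itself, controlled via the moment calculation of Section~\ref{sec:moments-wishart}. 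I expect the main technical obstacle to be precisely this interplay---balancing the order of moments used against the scale at which individual singular values can fluctuate---in order to show that $\|M\|_1$ is close to its quartercircle prediction with probability at least $1 - \exp(-c\, d^{1/4})$ even when $s$ is of order $1$.
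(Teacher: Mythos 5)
Your reduction to the Wishart model, the decomposition $W^R = sd\,E + M$, the identification of the crossing point $s=\gamma d^2$ from the mean $8/(3\pi)$ of the quartercircle law, and the use of L\'evy-type concentration on the sphere at scale $\exp(-cs)$ all match the substance of the paper's argument in the near-threshold regime $s\asymp d^2$ (the paper routes the sphere concentration through the gauge of the convex body $\cR=\{\rho : \|\rho^R\|_1\leq 1\}$ and quotes Proposition 4.2 of \cite{asy}, but that is the same mechanism as your direct Lipschitz estimate). The uniform-integrability point you raise is genuine, though it can be settled more cheaply than by an operator-norm bound: boundedness of the second moments $\frac{1}{d^2}\tr (QQ^*)$ already supplies the uniform integrability needed to upgrade weak convergence to convergence of $\frac{1}{d^2}\|Q\|_1$.

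The gap is in part (1) for $s\ll d^2$. You propose to show that $\|M\|_1$ stays close to its quartercircle prediction with probability $1-\exp(-cd^{1/4})$ even for $s=O(1)$, and you flag this as the main obstacle without offering a mechanism. That goal is both harder than necessary and not what is needed: when $s\leq c_0 d^2$ for a small absolute constant $c_0$, the exact constant $8/(3\pi)$ is irrelevant, since any two-sided bound $cd^2\leq\|Q\|_1\leq Cd^2$ forces $\|\rho^R\|_1\geq cd/\sqrt{s}-O(1/d)>1$. The paper obtains this from the Schatten-norm sandwich $\|Q\|_2^3/\|Q\|_4^2\leq\|Q\|_1\leq d\|Q\|_2$, the explicit low-order computations $\E\|Q\|_2^2\asymp d^2$ and $\E\|Q\|_4^4\asymp d^2$ with variances $O(d^2)$ (valid for all $d,s$ with no asymptotic regime assumed), and the hypercontractive tail bound for Gaussian polynomials of fixed degree (\cite{janson}, Theorem 6.7): for the degree-$8$ polynomial $\|Q\|_4^4$ the tail is $\exp(-c\,t^{2/8})$, and taking $t\asymp d$ (a deviation comparable to the mean, measured in units of the standard deviation $\asymp d$) is precisely what produces the exponent $d^{1/4}$ in the statement. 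Without this interpolation-plus-low-degree-concentration step, your plan of controlling the full moment sequence with quantitative rates at $s=O(1)$ has no visible path to an $\exp(-c\,d^{1/4})$ bound, so as written the small-$s$ half of part (1) is not established.
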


A comparison between the thresholds for the Peres and realignment criteria shows that the latter is {\it generically asymptotically weaker}. When the environment dimension $s$ is between $(8/3\pi)^2d^2$ and $4d^2$, random states are non-PPT, but the realignment criterion fails to detect entanglement. As noted in the introduction, this range includes the special case $s=d^2$, which corresponds to the uniform measure on the set of states (the density in equation \eqref{eq:formula-density} being constant).

The theorem will follow from the description of the limiting distribution of singular values of $\rho^R$, which are
shown to converge towards a quarter-circle distribution. This result can be equivalently stated using Wishart matrices instead of random quantum states, which are more convenient from a random matrix theory perspective. We develop this approach in the following section.

We also consider the case of unbalanced states, i.e. $\C^{d_1} \otimes \C^{d_2}$ with $d_1$ fixed, and $d_2$ tending to infinity. In this asymptotic regime, we show that the threshold for the realignment criterion is exactly $s=d_1^2$. Since the threshold occurs for a finite value of $s$, the realignment criterion is \emph{qualitatively weaker} in the unbalanced case than the Peres criterion (see \cite{bn1}).

\section{Spectral distribution of realigned Wishart matrices}

\label{sec:wishart}

We describe here a new result from random matrix theory, which is the main ingredient in the proof of Theorem \ref{thm:threshold-realignment}. Let $X$ be a $d^2 \times s$ random matrix with i.i.d. $\mathcal N_\C(0,1)$ (standard complex Gaussian) entries, and $W=XX^\dagger$. The random matrix $W$ is known as a Wishart random matrix of parameters $(d^2,s)$. We consider $W$ as an operator on $\C^d \otimes \C^d$, therefore we can consider the realignment of $W$, denoted $R=W^R$. We may write $W_{d^2,s}, R_{d^2,s}, \dots$ instead of $W,R,\dots$ if we want to make dimensions explicit. 

It turns out that to be simpler to study the difference $W-s\Id$ rather than $W$ itself, or equivalently to study the operator $R-dsE$ (recall that $\Id^R=dE$). The following theorem (proved in Section \ref{sec:proof-moments}) describes the asymptotic behaviour of the singular values of $R-dsE$. We denote by $\mathrm{Cat}_p := \frac{1}{p+1} \binom{2p}{p}$ the $p$th Catalan number.

Recall that the (standard) quarter-circle distribution is the probability measure
\[\mu_{qc} = \frac{\sqrt{4 - x^2}}{\pi} 1_{[0,2]}(x) \, dx.\]
The even moments of this measure are given by the Catalan numbers: 
\[ \int_0^{2} x^{2p} d\mu_{qc}(x) = \mathrm{Cat}_p , \]
whereas the odd moments are
\[ \int_0^{2} x^{2p+1} d\mu_{qc}(x) =  \frac{2^{4p+5}p!(p+2)!}{\pi (2p+4)!}. \]
In particular, the average of $\mu_{qc}$ is $8/(3\pi)$.

\begin{theorem} \label{theo:quartercircle}
For every integers $s,d$, let $R_{d^2,s}$ be the realignment of a Wishart matrix $W_{d^2,s}$, and $Q=Q_{d^2,s}=\frac{1}{d\sqrt{s}}(R_{d^2,s}-dsE)$. Then, when $d$ and $s$ tend to infinity, 
 \[ \lim_{d,s \to \iy} \E \frac{1}{d^2} \tr [(QQ^*)^p] = \mathrm{Cat}_p, \]
\[ \lim_{d,s \to \iy} \Var \frac{1}{d^2} \tr [(QQ^*)^p] = 0. \]
\end{theorem}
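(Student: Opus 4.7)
The plan is to use the method of moments. Since $QQ^*$ is positive semidefinite, it suffices to prove that for every fixed $p$ the moment $\E\tfrac{1}{d^2}\tr[(QQ^*)^p]$ converges to $\mathrm{Cat}_p$ together with the stated variance estimate; this yields weak convergence of the empirical singular value distribution of $Q$ to the quartercircle law $\mu_{qc}$. Setting $M := R_{d^2,s} - dsE$, the central identity to establish is
\begin{equation*}
  \E \tr[(MM^*)^p] = d^{2p+2}\, s^p\, \mathrm{Cat}_p + o(d^{2p+2}s^p).
\end{equation*}

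First I would write $M_{(ik)(jl)} = \sum_a X_{ij,a}\overline{X_{kl,a}} - s\delta_{ik}\delta_{jl}$ and note that $\E M = 0$; more importantly, the subtraction of $dsE$ exactly cancels the Wick contraction that pairs the two Gaussian factors sitting inside the same copy of $M$ or $M^*$. Expanding $\tr[(MM^*)^p]$ into a sum over index loops and applying the Wick/Isserlis formula for complex Gaussians rewrites $\E\tr[(MM^*)^p]$ as a sum over pair partitions $\alpha$ matching the $2p$ factors $X$ with the $2p$ factors $\bar X$ in the expansion, with the pairings matching an $X$ to the $\bar X$ inside the same $M$ or $M^*$ suppressed. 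Each surviving $\alpha$ contributes $d^{L(\alpha)}\, s^p$, where the $s^p$ records the $p$ contractions of the environment index $a$ and $L(\alpha)$ is the number of free index loops of the ribbon diagram associated to $\alpha$.

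At this point I would invoke the graphical calculus of Section~\ref{sec:graphical-calculus} to analyse $L(\alpha)$. A standard Euler-characteristic argument gives $L(\alpha) \leq 2p+2$, with equality exactly on planar pairings. A direct inspection identifies the planar admissible pairings with non-crossing pair partitions of $\{1,\dots,2p\}$, whose count is $\mathrm{Cat}_p$, while non-planar pairings lose at least two loops and contribute $O(d^{-2})$ relative to the leading term. Dividing by $(d\sqrt s)^{2p}\cdot d^2$ then produces the Catalan moments in the limit. The variance is handled by the same method applied to $\E\tr[(MM^*)^p]^2$: pairings that split into a disjoint union of two diagrams on the two copies of the trace reproduce $(\E\tr[(MM^*)^p])^2$, while pairings joining the two traces must lose at least two loops, leading to $\Var \tfrac{1}{d^2}\tr[(QQ^*)^p] = O(d^{-2})$.

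I expect the main obstacle to be not the broad strategy, which is a standard genus expansion, but the careful combinatorial bookkeeping. Specifically, one has to verify which Wick pairings are killed by the $-dsE$ counter-term, show that the surviving planar ones correspond exactly to the non-crossing pair partitions counted by $\mathrm{Cat}_p$ rather than to some smaller or shifted subset, and keep control of the subleading contributions uniformly in the regime where $d$ and $s$ are allowed to grow without a fixed ratio. This is exactly the bookkeeping that the graphical calculus of Section~\ref{sec:graphical-calculus} is designed to automate, so I would rely on it to read off $L(\alpha)$ and the genus of each diagram directly, rather than attempting a bare-hands enumeration.
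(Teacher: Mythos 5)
Your overall route is the same as the paper's: center the Wishart matrix, apply the Wick formula via the graphical calculus, use the $-dsE$ counter-term to kill the self-contractions (your cancellation claim is exactly the paper's involution argument showing that Wick permutations with fixed points drop out of the signed sum), isolate the dominant pairings among the fixed-point-free ones, and treat the variance by the same expansion of $\E\tr^2[(QQ^*)^p]$, with diagrams connecting the two traces losing at least two loops. However, there is one genuine error in the bookkeeping, and it changes the answer. The weight of a surviving Wick pairing $\alpha\in\mathcal S_{2p}$ is $d^{\#(\alpha\gamma^{-1})+\#(\alpha\delta^{-1})}\, s^{\#\alpha}$, not $d^{L(\alpha)}\, s^{p}$: the $2p$ environment indices are initially wired by the identity permutation, so the number of $s$-loops is the number of cycles of $\alpha$, and this equals $p$ only when $\alpha$ is a product of $p$ disjoint transpositions. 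Consequently the Euler-characteristic bound $L(\alpha)\leq 2p+2$ alone does \emph{not} single out the non-crossing pair partitions. Concretely, for $p=2$ the two $4$-cycles $(1\,2\,3\,4)$ and $(1\,4\,3\,2)$ are fixed-point free and saturate $L(\alpha)=6$, yet they are not pair partitions; with your weight $s^{p}$ they would contribute at leading order and give $\lim \E\frac{1}{d^2}\tr[(QQ^*)^2]=4$ instead of $\mathrm{Cat}_2=2$. (Compare the paper's exact value $2+2s^{-1}+d^{-2}+4s^{-1}d^{-2}$, in which these two permutations appear with the correct weight $s^{-1}$ and vanish in the limit.)

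The missing ingredient is a second, independent inequality on the $s$-exponent: a fixed-point-free $\alpha$ satisfies $|\alpha|\geq p$, i.e. $\#\alpha\leq p$, with equality if and only if $\alpha$ is an involution without fixed points. Only the combination of the two saturation conditions, $\#\alpha=p$ together with the geodesic condition $|\alpha\gamma^{-1}|+|\alpha\delta^{-1}|=|\gamma\delta|=2p-2$, forces $\alpha=\mathrm{fat}(\pi)$ for some $\pi\in NC(p)$ and yields the count $\mathrm{Cat}_p$; and only this double bound makes every subleading term $O(d^{-2})+O(s^{-1})$, hence negligible uniformly when $d$ and $s$ grow with no fixed ratio --- precisely the uniformity issue you flag at the end. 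With this correction your argument coincides with the paper's proof; the variance part of your sketch is fine as is, modulo the same adjustment of the $s$-exponent.
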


We emphasize that there is no assumption about the relative growth of $s$ and $d$, besides the fact that they both tend to infinity.

Theorem \ref{theo:quartercircle} can be restated as follows: the empirical singular value distribution of $Q$, defined as $\frac{1}{d^2} \sum_{i=1}^{d^2} \sigma_i(Q)$, converges in moments towards a quarter-circle distribution $\mu_{qc}$. Via standard arguments (see e.g. \cite{aub} for a sketch), it follows that for every continuous function $f:\R^+ \to \R$ with polynomial growth, we have (convergence in probability)
\begin{equation} \label{eq:test-functions} \frac{1}{d^2} \tr f(|Q_{d^2,s}|) \stackrel{\P}{\longrightarrow} \int_0^2 f d\mu_{sc} .\end{equation}

When applied to the function $f(x)=x$, equation \eqref{eq:test-functions} yields
\begin{equation} \label{eq:equivalent-for-Q} \|Q_{d^2,s}\|_1 \stackrel{\P}{\sim} \frac{8d^2}{3\pi}\end{equation}
(by $X \stackrel{\P}{\sim} Y$ we mean that the ratio $X/Y$ converges in probability towards $1$).

\section{From Wishart matrices to random states: proof of Theorem \ref{thm:threshold-realignment}}

\label{sec:from-wishart-to-states}

In this section we show how to derive Theorem \ref{thm:threshold-realignment} from the results on Wishart matrices. 
Induced random states are closely related to Wishart matrices. Indeed, it is well known that if $W=W_{d^2,s}$ is a Wishart matrix, then $\rho := (\tr W)^{-1} W$ is a random state with distribution $\mu_{d^2,s}$. 

We are going to prove first a weak form of the theorem, where we show that the threshold is of order $d^2$, without identifying the exact constant $\gamma=(8/3\pi)^2$. Here is the relevant proposition; the proof will use only the expansion of $\tr (QQ^*)^p$ for $p=1$ and $p=2$, and will have the advantage to work for all values of $s$ (including $s \ll d^2$ or $s \gg d^2$, which require special attention). 

\begin{proposition} \label{prop:unsharp-threshold}
There exist absolute constants $c,C,c_0$ and $C_0$ such that, for a random state $\rho$ on $\C^d \otimes \C^d$ with distribution $\mu_{d^2,s}$, the following holds
\begin{enumerate}
 \item If $s \leq c_0d^2$, then $\P(\|\rho^R\|_1 > 1) \geq 1-C \exp (-cd^{1/4})$.
 \item If $s \geq C_0d^2$, then $\P(\|\rho^R\|_1 \leq 1) \geq 1- C \exp(-cd^{1/4})$.
\end{enumerate}
\end{proposition}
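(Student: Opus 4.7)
\medskip
The plan is to reduce $\|\rho^R\|_1$ to scalar statistics of the underlying Wishart matrix $W=W_{d^2,s}$, and then invoke Theorem \ref{theo:quartercircle} at the low moments $p=1$ and $p=2$ only. Since $\rho=W/\tr W$ and realignment is linear, $\rho^R=W^R/\tr W$. Using the decomposition $W^R = dsE + d\sqrt{s}\,Q$ and the fact that $\|dsE\|_1=ds$, the triangle inequality yields
\[ d\sqrt{s}\,\|Q\|_1 - ds \,\leq\, \|W^R\|_1 \,\leq\, d\sqrt{s}\,\|Q\|_1 + ds. \]
The normalization $\tr W$ is a sum of $sd^2$ i.i.d.\ real chi-squared variables, hence concentrates sharply around $sd^2$ with probability at least $1-C\exp(-csd^2)$ for any fixed relative error, which is stronger than the bound we need.

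\medskip
The next step is to pinch $\|Q\|_1$ between two expressions that depend only on $\tr QQ^*$ and $\tr(QQ^*)^2$. Cauchy--Schwarz applied to the $d^2$ singular values of $Q$ gives the upper bound
\[ \|Q\|_1 \leq d\sqrt{\tr QQ^*}. \]
For a lower bound, H\"older with exponents $(3/2,3)$, splitting $\sigma_i^2=\sigma_i^{2/3}\cdot\sigma_i^{4/3}$, yields $\tr QQ^* \leq \|Q\|_1^{2/3}(\tr(QQ^*)^2)^{1/3}$, whence
\[ \|Q\|_1 \geq \frac{(\tr QQ^*)^{3/2}}{(\tr(QQ^*)^2)^{1/2}}. \]
Theorem \ref{theo:quartercircle} at $p=1$ and $p=2$ gives $\mathrm{Cat}_1=1$ and $\mathrm{Cat}_2=2$, so with high probability $\tr QQ^*$ is close to $d^2$ and $\tr(QQ^*)^2$ is close to $2d^2$. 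The two inequalities then constrain $\|Q\|_1$ to lie between roughly $d^2/\sqrt{2}$ and $d^2$.

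\medskip
Substituting back into the sandwich for $\|W^R\|_1$ and dividing by $\tr W\approx sd^2$ produces, up to multiplicative $1+o(1)$ errors,
\[ \frac{d}{\sqrt{2s}}-\frac{1}{d} \,\leq\, \|\rho^R\|_1 \,\leq\, \frac{d}{\sqrt{s}}+\frac{1}{d}. \]
The lower bound exceeds $1$ as soon as $s\leq c_0 d^2$ for any absolute constant $c_0<1/2$ (with $d$ large enough to absorb the $1/d$ and $1+o(1)$ contributions), yielding part (1). Symmetrically, the upper bound is at most $1$ as soon as $s\geq C_0 d^2$ for any $C_0>1$, yielding part (2).

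\medskip
The remaining, and main, obstacle is to convert the vanishing-variance statement of Theorem \ref{theo:quartercircle} into an actual probability bound of the form $1-C\exp(-cd^{1/4})$. The key point is that $\tr(QQ^*)^p$ is a polynomial of degree $4p$ in the i.i.d.\ complex Gaussian entries of $X$, so Gaussian polynomial concentration (Hanson--Wright for $p=1$, higher-order analogues for $p=2$) gives tails of the form $\P(|Y-\E Y|>t)\leq C\exp(-c(t/\sigma)^{1/(2p)})$. Once the moment computation underlying Theorem \ref{theo:quartercircle} is inspected to extract a quantitative variance bound $\Var(d^{-2}\tr(QQ^*)^p)=O(d^{-\alpha})$ for some $\alpha>0$ (uniformly in $s$), a small constant choice of $t$ then produces a stretched-exponential tail comfortably beating $\exp(-cd^{1/4})$. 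A union bound over the three events controlling $\tr W$, $\tr QQ^*$ and $\tr(QQ^*)^2$ assembles the final probability estimate. The bookkeeping required to make the rate uniform in $s$---in particular for very small $s$, where $\tr W$ itself has only $sd^2$ degrees of freedom---is the real quantitative hurdle; the structural sandwich of the previous paragraphs does all the rest.
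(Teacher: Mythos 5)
Your proposal follows essentially the same route as the paper: the same sandwich relating $\|\rho^R\|_1$ to $\|Q\|_1/(d\sqrt{s})$ via the decomposition $W^R=dsE+d\sqrt{s}\,Q$ and the concentration of $\tr W$, the same H\"older pinching $\|Q\|_2^3/\|Q\|_4^2 \leq \|Q\|_1 \leq d\|Q\|_2$, and the same Gaussian polynomial concentration applied to the degree-$4$ and degree-$8$ polynomials $\|Q\|_2^2$ and $\|Q\|_4^4$, which is exactly where the $\exp(-cd^{1/4})$ rate originates. The one step you explicitly defer---two-sided moment bounds and a variance bound that are \emph{uniform in $s$}, which cannot be extracted from the asymptotic Theorem~\ref{theo:quartercircle} alone---is supplied in the paper by the exact finite-$(d,s)$ identities $\E\tr QQ^*=d^2$ and $\E\tr(QQ^*)^2=2d^2+2s^{-1}d^2+1+4s^{-1}$ together with $\Var\|Q\|_2^2,\Var\|Q\|_4^4\leq Cd^2$ (equations \eqref{eq:p-equals-1}, \eqref{eq:p-equals-2}, \eqref{eq:upper-bound-variance}), which is precisely why the paper's argument covers all regimes of $s$, including $s\ll d^2$ and $s\gg d^2$.
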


Assume that $\rho$ is defined by the equation $\rho := (\tr W)^{-1} W$, where $W$ is a Wishart random matrix with parameters $(d^2,s)$. Define $\alpha$ by $\tr W = (1+\alpha)d^2s$. The matrix $Q$ from Theorem \ref{theo:quartercircle} is related to $\rho$ by the equation $\frac{1}{d \sqrt{s}} Q = (1+\alpha) \rho^R-E/d$. In particular,

\begin{equation} \label{eq:from-rho-to-Q}
\frac{1}{1+\alpha} \left( \frac{\|Q\|_1}{d \sqrt{s}} - \frac{1}{d} \right) \leq  \|\rho^R\|_1 \leq 
\frac{1}{1+\alpha} \left( \frac{\|Q\|_1}{d \sqrt{s}} + \frac{1}{d} \right)
\end{equation}
 
The random variable $\tr W$ follows a $\chi^2$ distribution, and  the next lemma implies that with large
probability, $|\alpha| \leq 1/(d\sqrt{s})$.

\begin{lemma}[see e.g. \cite{ml}, Lemma 1] \label{lemma:chi2}
If $W=W_{d^2,s}$ is a Wishart matrix, then for every $0 < \e < 1$,
\[ \P \left( \left| \frac{\tr W}{d^2s} -1 \right| > \e \right) \leq 2 \exp (-c \e^2 d^2 s). \]
\end{lemma}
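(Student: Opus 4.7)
The plan is to recognize $\tr W$ as a sum of i.i.d.\ exponential random variables and apply a classical Chernoff bound. Writing $X=(X_{ij})$ with i.i.d.\ $\mathcal N_\C(0,1)$ entries, we have
\[
\tr W \,=\, \tr(XX^\dagger) \,=\, \sum_{i=1}^{d^2}\sum_{j=1}^{s} |X_{ij}|^2,
\]
and each $|X_{ij}|^2$ is exponential with mean $1$ (since $\Re X_{ij}$ and $\Im X_{ij}$ are independent $\mathcal N(0,1/2)$). Hence $\tr W$ is distributed as a sum of $N := d^2 s$ i.i.d.\ $\mathrm{Exp}(1)$ variables with mean $N$, and the task reduces to a two-sided deviation bound for such a sum.

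Next, exploit the moment generating function $\E e^{\lambda |X_{ij}|^2} = (1-\lambda)^{-1}$, valid for $\lambda<1$. Markov's inequality gives
\[
\P\bigl(\tr W \geq (1+\e)d^2 s\bigr) \,\leq\, \inf_{0<\lambda<1}(1-\lambda)^{-N} e^{-\lambda(1+\e)N} \,=\, \exp\bigl(N[\log(1+\e)-\e]\bigr),
\]
the infimum being attained at $\lambda=\e/(1+\e)$. A symmetric argument applied to $\E e^{-\mu \tr W}=(1+\mu)^{-N}$ with optimizer $\mu=\e/(1-\e)$ yields
\[
\P\bigl(\tr W \leq (1-\e)d^2 s\bigr) \,\leq\, \exp\bigl(N[\e+\log(1-\e)]\bigr).
\]
On $(0,1)$ one has the elementary estimates $\log(1+\e)-\e \leq -\e^2/(2(1+\e)) \leq -\e^2/4$ and $\e+\log(1-\e)\leq -\e^2/2$, so both tails are bounded by $\exp(-c\e^2 d^2 s)$ for some absolute $c>0$; a union bound over the two sides produces the factor $2$ in the statement.

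There is essentially no obstacle here: this is a textbook Bernstein/Chernoff estimate for a sum of sub-exponential variables (equivalently, a chi-squared tail bound, since $2\tr W \sim \chi^2_{2d^2 s}$). The only point requiring a little care is verifying that the Taylor-remainder inequalities hold uniformly in $\e\in(0,1)$, so that the resulting constant $c$ is genuinely absolute; this is a direct elementary calculation.
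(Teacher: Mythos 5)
Your proof is correct. The paper does not prove this lemma but simply cites it from \cite{ml}; the result there is likewise obtained by a Chernoff bound on the moment generating function of a chi-squared variable, so your self-contained derivation---writing $\tr W$ as a sum of $d^2s$ i.i.d.\ $\mathrm{Exp}(1)$ variables, optimizing the exponential Markov bound on each tail, and controlling $\log(1+\e)-\e$ and $\e+\log(1-\e)$ by $-\e^2/4$ and $-\e^2/2$ respectively---is essentially the intended argument and yields the stated bound with the explicit constant $c=1/4$.
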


\begin{proof}[Proof of Proposition \ref{prop:unsharp-threshold}]
It is possible to estimate $\|Q\|_1$ from the knowledge of $\|Q\|_2$ and $\|Q\|_4$, using the following inequalities

\begin{equation} \label{eq:holder} \frac{\|Q\|_2^3}{\|Q\|^2_4} \leq \|Q\|_1 \leq d \|Q\|_2 .\end{equation}

Since $\|Q\|_2^2 = \tr (QQ^*) $ and $\|Q\|_4^4 = \tr (QQ^*QQ^*)$ are polynomials in the matrix entries, they are easier to analyze. Here is a proposition which can be derived from the analysis in Section \ref{sec:proof-moments}---more precisely see equations \eqref{eq:p-equals-1}, \eqref{eq:p-equals-2} and \eqref{eq:upper-bound-variance}.

\begin{proposition}
There are absolute constants $c,C$ such that the following inequalities hold for every $d$ and $s$
\[ cd^2 \leq \E \|Q\|_2^2 \leq Cd^2, \quad \quad \quad cd^2 \leq \E \|Q\|_4^4 \leq Cd^2,\]
\[ \Var \|Q\|_2^2 \leq Cd^2, \quad \quad \quad \Var \|Q\|_4^4 \leq Cd^2 \]
\end{proposition}

We now use a general concentration inequality for polynomials in Gaussian variables.

\begin{proposition}[see \cite{janson}, Theorem 6.7]
Let $(G_i)$ be independent Gaussian variables, and $P$ be a polynomial of (total) degree $q$. Consider the random variable $Y=P(G_1,\dots,G_n)$. Then for every $t>0$,
\[ \P( |Y-\E Y| \geq t \sqrt{\Var Y} ) \leq C_q \exp (-c_q t^{2/q})  \]
($C_q$ and $c_q$ being constants depending only on $q$).
\end{proposition}

Applied to the polynomials $\|Q\|_2^2$ and $\|Q\|_4^4$ (of degree respectively $4$ and $8$), we obtain, that with large probability, both and $\|Q\|_2^2$ and $\|Q\|_4^4$ are of order $d^2$ (up to universal constant). Together with $\eqref{eq:holder}$, this yields that, for some absolute constants $c,C$
\begin{equation} \label{eq:bounds-on-norm} \P( cd^2 \leq \|Q\|_1 \leq Cd^2) \geq 1- C \exp(-cd^{1/4}) \end{equation}

From \eqref{eq:bounds-on-norm} and \eqref{eq:from-rho-to-Q}, we obtain that with probability larger than $1- C \exp(-cd^{1/4})$,
\[ \frac{1}{1+\alpha} \left( \frac{cd}{\sqrt{s}} - \frac{1}{d} \right) \leq  \|\rho^R\|_1 \leq 
\frac{1}{1+\alpha} \left( \frac{Cd}{\sqrt{s}} + \frac{1}{d} \right) \]

and Proposition \ref{prop:unsharp-threshold} follows from Lemma \ref{lemma:chi2}.
\end{proof}

We denote by $\cR$ the (convex) set of states which are not detected to be entangled by the realignment criterion
\[ \cR = \cR(\cH) = \{ \rho \textnormal{ state on $\cH \st \|\rho^R\|_1 \leq 1$} \} . \]
We also introduce the gauge $\|\cdot\|_\cR$ associated to the convex body $\cR$. It is defined for any state $\rho$ as
\begin{eqnarray*} \|\rho\|_{\cR} & = & \inf \left\{ t \geq 0 \st \frac{\Id}{d^2} + \frac{1}{t} \left(\rho-\frac{\Id}{d^2}\right) \in \cR \right\} \\
 & = & \inf \left\{ t \geq 0 \st \left\| \frac{E}{d} + \frac{1}{t} \left(\rho^R-\frac{E}{d}\right) \right\|_1 \leq 1
 \right\}.
\end{eqnarray*}

\begin{lemma} \label{lemma:norms} The following inequalities hold for any state $\rho$ on $\C^d \otimes \C^d$,
\[ \frac{d}{d+1} \| \rho^R - E/d \|_1 \leq \|\rho\|_\cR \leq \frac{d}{d-1} \| \rho^R - E/d \|_1 .\]
\end{lemma}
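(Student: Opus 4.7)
The plan is to apply the triangle inequality in both directions to the second, trace-norm characterization of $\|\rho\|_{\cR}$ displayed just above the lemma. Set $\nu = \|\rho^R - E/d\|_1$, and note that $E = \ketbra{\psi}{\psi}$ is a rank-one projector, so $\|E/d\|_1 = 1/d$. For any $t > 0$ the ordinary triangle inequality gives
\[ \left\| \frac{E}{d} + \frac{1}{t}\!\left(\rho^R-\frac{E}{d}\right) \right\|_1 \leq \frac{1}{d} + \frac{\nu}{t}, \]
and the right-hand side is at most $1$ as soon as $t \geq d\nu/(d-1)$; taking the infimum over admissible $t$ yields the upper bound $\|\rho\|_{\cR} \leq d\nu/(d-1)$.

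For the lower bound, the reverse triangle inequality gives
\[ \left\| \frac{E}{d} + \frac{1}{t}\!\left(\rho^R-\frac{E}{d}\right) \right\|_1 \geq \frac{\nu}{t} - \frac{1}{d}, \]
so requiring this expression to be at most $1$ forces $\nu/t \leq 1 + 1/d$, equivalently $t \geq d\nu/(d+1)$; the infimum therefore satisfies $\|\rho\|_{\cR} \geq d\nu/(d+1)$. There is no real obstacle here: the lemma simply records that on the affine hyperplane of trace-one Hermitian matrices through $\Id/d^2$, the gauge associated with $\cR$ agrees with the trace norm of the shifted realignment $\rho^R - E/d$, up to the multiplicative correction $(d\pm 1)/d$ produced by the contribution $\|E/d\|_1 = 1/d$ in the two triangle inequalities.
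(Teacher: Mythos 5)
Your proof is correct and follows essentially the same route as the paper: both bounds come from applying the triangle inequality (in its two directions) to the trace-norm characterization of the gauge, using $\|E/d\|_1 = 1/d$. The only cosmetic difference is that the paper evaluates at the optimal $t=\lambda$ where the norm equals $1$, while you argue directly with the infimum, which sidesteps the need to justify that the infimum is attained.
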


\begin{proof}
Let $\lambda = \|\rho\|_\cR$. Then $\left\| \frac{E}{d} + \frac{1}{\lambda} \left(\rho^R-\frac{E}{d}\right) \right\|_1=1$, and by the triangle inequality
\[ 1-\frac{1}{d} \leq \frac{1}{\lambda}  \left\| \rho^R-\frac{E}{d} \right\|_1 \leq 1+ \frac{1}{d} \]
and the result follows. 
\end{proof}

We are going to use a result from \cite{asy}, which gives a concentration inequality for the gauge of a random state (note that the inradius of $\cR$ equals the inradius of the set of separable states, which is $1/\sqrt{d^2(d^2-1)}$, see \cite{gb}). We obtain

\begin{proposition}[\cite{asy}, Proposition 4.2] \label{prop:concentration}
For every constant $c_0>0$, there are constants $c,C$ such that the following hold. Let $s \geq c_0d^2$, and $\rho$ be a random state on $\C^d \otimes \C^d$ with distribution $\mu_{d^2,s}$, then
\[ \P \left( \left| \|\rho\|_\cR - M \right|  \geq \eta \right) \leq C  \exp(-cs) + C\exp(-cs \eta^2) ,\]
where $M=M_{d^2,s}$ is the median of the random variable $\|\rho\|_\cR$. 
\end{proposition}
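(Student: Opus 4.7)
I would prove the bound by Gaussian concentration of measure. Parametrize $\rho = XX^\dagger/\tr(XX^\dagger)$ with $X \in \C^{d^2 \times s}$ an i.i.d.\ $\mathcal{N}_\C(0,1)$ matrix (so $\rho \sim \mu_{d^2,s}$), and aim to show that $F(X) := \|\rho(X)\|_\cR$ is Lipschitz in $X$ (Euclidean norm) with constant $L = O(1/\sqrt s)$ on a high-probability event $G$; Gaussian concentration for Lipschitz functions then delivers the $\exp(-cs\eta^2)$ tail.

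For the good event I would take $G = \{d^2 s/2 \leq \tr(XX^\dagger) \leq 2d^2 s\} \cap \{\lambda_{\min}(\rho) \geq c_1/d^2\}$. The first factor has complement probability $\leq 2\exp(-cd^2 s)$ by Lemma \ref{lemma:chi2}; the second has complement probability $\leq C\exp(-cs)$ by standard Marchenko--Pastur-type bounds for the smallest eigenvalue of a Wishart matrix (here the hypothesis $s \geq c_0 d^2$ is used). Together $\P(G^c) \leq C\exp(-cs)$, producing the first term of the bound.

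On $G$, factor $L \leq L_\rho \cdot L_g$. Differentiating $\rho(X) = XX^\dagger/\tr(XX^\dagger)$ and bounding $\|X\|_{op} \leq \|X\|_2 = \sqrt{\tr(XX^\dagger)} \leq \sqrt{2d^2 s}$ gives $L_\rho = O(1/(d\sqrt s))$ as a map from Euclidean to Hilbert--Schmidt norm. For the gauge, the naive estimate $L_g \leq 1/r_{\mathrm{HS}}(\cR) \approx d^2$ coming from the inradius $1/\sqrt{d^2(d^2-1)}$ is too weak; I would improve to $L_g = O(d)$ by restricting to states with spectral gap, on which the PSD constraint in the Minkowski scaling toward $\Id/d^2$ is automatically slack and $\|\rho\|_\cR$ agrees with the gauge of the larger body $\tilde\cR := \{\tau = \tau^\dagger : \tr\tau = 1, \|\tau^R\|_1 \leq 1\}$ (no positivity constraint). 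The HS-inradius of $\tilde\cR$ around $\Id/d^2$ is $\Omega(1/d)$: for any trace-$1$ Hermitian $\tau$ with $\|\tau - \Id/d^2\|_2 \leq (d-1)/d^2$ one has $\|\tau^R\|_1 \leq \|E/d\|_1 + \|(\tau - \Id/d^2)^R\|_1 \leq 1/d + d\|\tau - \Id/d^2\|_2 \leq 1$, using $\|A\|_1 \leq d\|A\|_2$ for $d^2 \times d^2$ matrices and the HS-invariance of realignment. Combining, $L = O(1/\sqrt s)$, so Gaussian concentration yields $\P(|F - M| \geq \eta, G) \leq C\exp(-c\eta^2/L^2) = C\exp(-cs\eta^2)$, and adding $\P(G^c)$ completes the bound.

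The hard part will be the sharp gauge Lipschitz constant $L_g = O(d)$ rather than the naive $O(d^2)$: without it the tail would degrade to $\exp(-cs\eta^2/d^2)$, which in the regime $s \asymp d^2$ is useless. The key insights making this possible are that (i) on well-conditioned typical states the PSD constraint in the Minkowski functional is inactive, so the relevant ball is that of the larger body $\tilde \cR$, and (ii) realignment is an isometry for the HS norm, so the rank-$d^2$ inequality $\|A\|_1 \leq d\|A\|_2$ (rather than the weaker $\|A\|_1 \leq d^2 \|A\|_2$) controls the dual-trace-norm blow-up in the inradius computation.
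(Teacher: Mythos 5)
First, a point of reference: the paper does not prove this proposition at all --- it is imported verbatim from \cite{asy} (Proposition 4.2 there), with only the remark that the proof extends from $s\geq d^2$ to $s\geq c_0d^2$. So your attempt can only be compared with the argument in that reference, which is of the same general type: realize $\rho$ as a normalized Wishart matrix, show the gauge is a Lipschitz function of the Gaussian matrix $X$ on a high-probability event, and apply Gaussian concentration. Your variant differs in where the crucial factor of $d$ is gained. The route in \cite{asy} uses the sharp bound $\|X\|_{op}\lesssim\sqrt{s}$ (a largest-singular-value estimate), giving $L_\rho=O(1/(d^2\sqrt{s}))$, combined with the generic inradius $\approx 1/d^2$; you instead keep the crude $\|X\|_{op}\leq\|X\|_2\approx d\sqrt{s}$ and gain the factor of $d$ from the realignment-specific inradius: since realignment is a Hilbert--Schmidt isometry and $\|A\|_1\leq d\|A\|_2$ for $d^2\times d^2$ matrices, the body $\tilde\cR=\{\tau=\tau^\dagger:\tr\tau=1,\ \|\tau^R\|_1\leq 1\}$ has HS-inradius $(d-1)/d^2=\Omega(1/d)$ around $\Id/d^2$, much larger than the $1/\sqrt{d^2(d^2-1)}$ of $\cR$. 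That computation is correct and is a genuinely nice observation: it trades a largest-eigenvalue estimate for the elementary $\chi^2$ concentration of $\tr W$, at the price of being specific to the realignment body rather than working for a general convex body as in \cite{asy}.

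There is, however, one concrete error. Your good event contains $\{\lambda_{\min}(\rho)\geq c_1/d^2\}$, claimed to fail with probability at most $C\exp(-cs)$. When $s<d^2$ the matrix $W=XX^\dagger$ has rank at most $s<d^2$, so $\lambda_{\min}(\rho)=0$ almost surely and this event is \emph{empty}; hard-edge behaviour shows the problem persists up to $s\leq(1+\e)d^2$. Since the proposition must hold for every $c_0>0$, and is applied in the proof of Theorem \ref{thm:threshold-realignment}(1) precisely for $s\leq(1-\e)\gamma d^2$ with $\gamma\approx 0.72<1$, your argument as written collapses exactly in the regime where it is needed. The repair is easy: the spectral-gap condition is only invoked to argue that the positivity constraint in the Minkowski functional is slack, but the paper's operative definition of $\|\rho\|_\cR$ (the second displayed line of the definition) already discards positivity and \emph{is} the gauge of your $\tilde\cR$, so you should simply delete that part of $G$. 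Two smaller points to tidy: the set $\{\tr(XX^\dagger)\geq d^2s/2\}$ is not convex, so a gradient bound on $G$ does not immediately yield a Lipschitz bound on $G$ (handle the normalization by $\tr W$ separately, or invoke the standard local-concentration lemma); and concentration around the median of the restricted or extended function must still be converted into concentration around the median $M$ of $\|\rho\|_\cR$ itself, which the additive $C\exp(-cs)$ term is there to absorb.
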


Note that Proposition 4.2 in \cite{asy} is stated with the restriction $s \geq d^2$; however it can be checked that the proof extends to the range $s \geq c_0d^2$ for any $c_0>0$, altering only the values of $c$ and $C$. We can now pass to the complete proof of Theorem \ref{thm:threshold-realignment},

\begin{proof}[Proof of Theorem \ref{thm:threshold-realignment}]

Let us show the first part of the statement, the second part being similar. Since the case when $s \leq c_0d^2$ was covered by Proposition \ref{prop:unsharp-threshold}, we may assume $s \geq c_0d^2$. 

Denote by $\pi_{d^2,s}$ the probability that a random state on $\C^d \otimes \C^d$, with distribution $\mu_{d^2,s}$,
belongs to the set $\cR$. Fix $\e > 0$. For each $d$, let $s=s_d$ be the number with $c_0d^2 \leq s \leq (1-\e)\gamma d^2$ such that $\pi_{d^2,s}$ is maximal. We claim that
\[ \liminf_{d \to \iy} M_{d^2,s_d} \geq \frac{1}{\sqrt{1-\e}} .\]

Indeed, by Lemma \ref{lemma:norms}, the random variables $\|\rho\|_\cR$ and $\|\rho^R-E/D\|_1$ have asymptotically equivalent medians. Moreover, by \eqref{eq:from-rho-to-Q}, it suffices to show that
\[ \liminf_{d \to \iy} \ \textnormal{Median} \left( \frac{\|Q\|_1}{d \sqrt{s}} \right) \geq \frac{1}{\sqrt{1-\e}} ,\]
and this last inequality follows immediately from \eqref{eq:equivalent-for-Q}.

Choose some $\eta$ such that $0<\eta < \frac{1}{\sqrt{1-\e}}-1$. Applying Proposition \ref{prop:concentration}, we obtain, for $d$ large enough,
\[ \pi_{d^2,s} = \P ( \rho \in \cR) \leq \P \left( \|\rho\|_\cR < M_{d^2,s_d} - \eta \right) \leq C(\e) \exp(-c(\e)s_d).\]
Small values of $d$ are taken into account by adjusting the constants if necessary. 

For the second part of the theorem, consider the number $s=s'_d$ with $s \geq (1-\e)\gamma d^2$ such that $\pi_{d^2,s}$ is minimal. This number is well-defined since for fixed $d$, the sequence $\pi_{d^2,s}$ converges to $1$ as $s$ tends to infinity (the measures $\mu_{d^2,s}$ converge towards the Dirac mass at the maximally mixed state). The rest of the proof is similar.
\end{proof}

\section{Background on combinatorics of non-crossing partitions and graphical calculus}

\label{sec:graphical-calculus}

Let us first recall a number of results from the combinatorial theory of noncrossing partitions; see \cite{nsp} for a detailed presentation of the theory. For a permutation $\sigma \in S_p$, we introduce the following standard notation:
\begin{itemize}
\item $\# \sigma$ is the number of cycles of $\sigma$;
\item $|\sigma|$ is its length, defined as the minimal number $k$ such that $\sigma$ can be written as a product of $k$ transpositions. The function $(\sigma, \pi) \to |\sigma^{-1}\pi|$ defines a distance on $S_p$. One has $\#\sigma + |\sigma| = p$.
\end{itemize}
Let $\xi\in S_p$ be the canonical full cycle $\xi = (1 2 \cdots p)$. The set of permutations $\sigma \in S_p$ which saturate the triangular inequality $|\sigma| + |\sigma^{-1} \xi| = |\xi| = p-1$ is in bijection with the set $NC(p)$ of noncrossing partitions of $[p]:=\{1,\ldots,p\}$. We call such permutations \emph{geodesic} and we shall not distinguish between a non crossing partition and its associated geodesic permutation. We also recall a well known bijection between $NC(p)$ and the set $NC_2(2p)$ of noncrossing \emph{pairings} of $2p$ elements. To a noncrossing partition $\pi \in NC(p)$ we associate an element $\mathrm{fat}(\pi) \in NC_2(2p)$ as follows: for each block $\{i_1, i_2, \ldots, i_k\}$ of $\pi$, we add the pairings $\{2i_1-1, 2i_k\}, \{2i_1, 2i_2-1\}, \{2i_2, 2i_3-1\}, \ldots, \{2i_{k-1}, 2i_k-1\}$ to $\mathrm{fat}(\pi)$. The inverse operation is given by collapsing the elements $2i-1, 2i \in \{1, \ldots, 2p\}$ to a single element $i \in \{1, \ldots, p\}$.

In the rest of the paper we shall perform moment computations for random matrices with Gaussian entries. The main tool we use is the Wick formula (see e.g.  \cite{zvonkin} for a proof)

\begin{lemma}\label{lem:wick}
Let $X_1,\dots,X_k$ be jointly Gaussian centered random variables.
If $k=2p+1$ then $\E[X_1\cdots X_k]=0$. If $k=2p$ then
\begin{equation}\label{eq:wick}
\E[X_1\cdots X_k]=\sum_{\substack{\pi=\{\{i_1,j_1\},\ldots , \{i_p,j_p\}\} \\ \text{pairing of }\{1,\ldots ,k\} }} \quad \prod_{m=1}^p \E[X_{i_m}X_{j_m}]
\end{equation}
\end{lemma}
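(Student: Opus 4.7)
The plan is to split into the odd and even cases. For $k = 2p+1$ odd, the centered jointly Gaussian vector $(X_1, \ldots, X_k)$ has distribution invariant under $X \mapsto -X$, so
$\E[X_1 \cdots X_k] = \E[(-X_1)\cdots(-X_k)] = (-1)^{2p+1}\E[X_1 \cdots X_k] = -\E[X_1 \cdots X_k]$,
which forces the moment to vanish.

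For $k = 2p$ even, I would identify $\E[X_1 \cdots X_k]$ as a coefficient in the joint moment generating function. Setting $C_{ij} = \E[X_i X_j]$, the MGF is $\varphi(t) := \E\left[\exp\left(\sum_j t_j X_j\right)\right] = \exp\left(\tfrac{1}{2}\sum_{i,j} C_{ij} t_i t_j\right)$, and $\E[X_1 \cdots X_k]$ equals the coefficient of the multilinear monomial $t_1 t_2 \cdots t_k$ in the Taylor expansion of $\varphi$ at the origin. Expanding $\varphi = \sum_{q\geq 0} Q(t)^q / q!$ with $Q(t) = \tfrac{1}{2}\sum_{i,j} C_{ij} t_i t_j$, only the term $q = p$ can contribute since $Q(t)^q$ has total degree $2q$. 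Expanding $Q(t)^p$ as a sum over ordered $2p$-tuples $(i_1, j_1, \ldots, i_p, j_p)$, the ones contributing to $t_1 \cdots t_{2p}$ are precisely the permutations of $\{1,\ldots,2p\}$; each unordered pairing $\pi$ arises from exactly $p!\cdot 2^p$ such ordered tuples (the $p!$ orderings of the blocks and the $2^p$ orderings within each block). This combinatorial factor exactly cancels the prefactor $1/(p!\cdot 2^p)$, leaving $\sum_\pi \prod_{\{a,b\}\in\pi} C_{ab}$ as claimed.

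An alternative inductive route uses Gaussian integration by parts (Stein's identity): for centered jointly Gaussian variables and any polynomial $f$, one has $\E[X_1 f(X_2,\ldots,X_k)] = \sum_{j=2}^k C_{1j}\, \E[\partial_{X_j} f]$. Applied with $f = X_2 \cdots X_k$, this yields the recursion $\E[X_1\cdots X_k] = \sum_{j=2}^k C_{1j}\, \E[X_2\cdots \widehat{X_j}\cdots X_k]$, which is exactly the recursion satisfied by the pairing sum, and induction on $k$ closes the argument. The main obstacle in either approach is purely combinatorial bookkeeping—verifying the $p!\cdot 2^p$ overcount of ordered tuples per unordered pairing, or equivalently that the pairing sum solves the Stein recursion—after which the statement is essentially automatic. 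The complex-Gaussian entries used later in the paper are covered by the same lemma once each complex variable is decomposed into its two real Gaussian components.
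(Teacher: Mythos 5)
Your proposal is correct, but there is nothing in the paper to compare it against: the authors state Lemma \ref{lem:wick} as a known result and simply point to the literature (Zvonkin's survey) for a proof, so any complete argument you give is necessarily ``different from the paper's.'' Your moment-generating-function derivation is a complete and standard one. The parity argument via the symmetry $X \mapsto -X$ correctly disposes of the odd case. In the even case, the identification of $\E[X_1\cdots X_{2p}]$ with the coefficient of $t_1\cdots t_{2p}$ in $\exp\bigl(\tfrac{1}{2}\sum_{i,j}C_{ij}t_it_j\bigr)$ is legitimate (the Gaussian MGF is entire, so term-by-term differentiation and coefficient extraction are justified), and the combinatorial bookkeeping is right: each unordered pairing is represented by exactly $p!\,2^p$ ordered tuples (here you implicitly use $C_{ij}=C_{ji}$, which holds), cancelling the prefactor $1/(p!\,2^p)$. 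The alternative route through Gaussian integration by parts gives the same recursion that the pairing sum manifestly satisfies (group pairings according to the partner of the index $1$), so either argument closes the proof. Your final remark, that the complex Gaussian entries used later in the paper are covered by decomposing each variable into its two real Gaussian components, is exactly the right observation to connect the lemma to its applications in Sections \ref{sec:graphical-calculus}--\ref{sec:proof-moments}.
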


The above formula is very useful when applied to moments of Gaussian random matrices. Moreover, a \emph{graphical formalism} adapted to random matrices was developed in \cite{cn2} in order to facilitate its application. This graphical calculus is similar to the one introduced in \cite{cn1} for unitary integrals and the corresponding Weingarten formula. We present next the basic ideas of the Gaussian graphical calculus and we refer the interested reader to \cite{cn2} for the details. 

In the Gaussian graphical calculus tensors (and, in particular, matrices) are represented by \emph{boxes}. In order to specify the vector space a tensor belongs to, boxes are decorated with differently shaped symbols, where each symbol corresponds to a vector space. The symbols are empty (white) or filled (black), corresponding to primal or dual spaces. Tensor contractions are represented graphically by \emph{wires} connecting these symbols. A wire should always connect two symbols of the same shape (corresponding thus to vector spaces of the same dimension). A wire connecting an empty symbol with a filled symbol of the same shape corresponds to the canonical map $\C^n \otimes (\C^n)^* \to \C$. However, we shall allow wires connecting two white or black symbols, by identifying non-isomorphically a vector space with its dual. Finally, a \emph{diagram} is a collection of such boxes and wires and corresponds to an element in a tensor product space.  

The main advantage of such a representation is that it provides an efficient way of computing expectation values of such tensors when some (or all) of the boxes are random matrices with i.i.d. Gaussian entries. Indeed, there exists an efficient way of implementing the Wick formula in Lemma \ref{lem:wick}. When the entries of the Gaussian matrices have standard normal distributions, the covariances in equation \eqref{eq:wick} are just delta functions. We state now the graphical Wick formula from \cite{cn2}. 

\begin{theorem}\label{thm:Wick_diag}
Let $\D$ a diagram containing $p$ boxes $X$ and $p$ boxes $\bar X$ which correspond to random matrices with i.i.d. standard Gaussian entries. Then
\[\E_X[\D]=\sum_{\alpha \in \mathcal S_p} \D_\alpha,\]
where the diagram $\D_\alpha$ is constructed as follows. One starts by erasing the boxes $X$ and $\bar X$, but keeps the symbols attached to these boxes. Then, the decorations (white and black) of the $i$-th $X$ box are paired with the decorations of the $\alpha(i)$-th $\bar X$ box in a coherent manner, see Figure \ref{fig:expectation_Gaussian}. In this way, we obtain a new diagram $\D_\alpha$ which does not contain any $X$ or $\bar X$ boxes. The resulting diagrams $\mathcal D_\alpha$ may contain loops, which correspond to scalars; these scalars are equal to the dimension of the vector space associated to the decorations.
\end{theorem}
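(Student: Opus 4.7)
The plan is to unfold the diagram $\D$ into coordinates, invoke the analytic Wick formula (Lemma \ref{lem:wick}) on the resulting polynomial expression in the entries of $X$ and $\bar X$, and then translate each surviving Wick pairing back into graphical language. After choosing bases on each vector space, $\D$ becomes a finite sum over index configurations of products of entries of $X$, entries of $\bar X$, entries of the fixed decorated boxes, and Kronecker deltas carried by the wires. Since expectation commutes with this finite sum, it suffices to evaluate
\[ \E \left[ \prod_{i=1}^p X_{a_i b_i} \prod_{j=1}^p \bar X_{c_j d_j} \right] \]
for arbitrary choices of indices.

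Applying the Wick formula expresses this as a sum over pairings of $2p$ factors. Because the entries of $X$ are i.i.d. standard complex Gaussians, one has $\E[X_{ab} X_{cd}] = 0$ and $\E[\bar X_{ab} \bar X_{cd}] = 0$, so any pairing matching two $X$'s (or two $\bar X$'s) contributes zero; the only surviving pairings are bijections between the $p$ copies of $X$ and the $p$ copies of $\bar X$. These bijections are canonically indexed by permutations $\alpha \in \mathcal{S}_p$, with $\alpha(i)$ specifying which $\bar X$ is paired with the $i$-th $X$. Each surviving $\alpha$ contributes $\prod_{i=1}^p \delta_{a_i c_{\alpha(i)}} \delta_{b_i d_{\alpha(i)}}$.

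The final step is to interpret these Kronecker deltas as wires in a new diagram. The factor $\delta_{a_i c_{\alpha(i)}}$ connects the decoration carrying the index $a_i$ of the erased $X_i$ box to the decoration carrying $c_{\alpha(i)}$ of the erased $\bar X_{\alpha(i)}$ box, and similarly for the ``$b/d$'' pair. This is exactly the prescription ``pair the decorations in a coherent manner'' from the statement, the coherence amounting to matching shapes (hence dimensions) between a white decoration of $X_i$ and the same-shape white decoration of $\bar X_{\alpha(i)}$, and likewise for black decorations. A wire that no longer connects to any remaining box becomes a closed loop, and the free summation of a Kronecker delta along it produces a factor equal to the dimension of the associated space, matching the claim about loops in $\D_\alpha$.

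The main obstacle is purely bookkeeping: one has to check that the graphical rule defining $\D_\alpha$ is exactly dictated by the covariance $\E[X_{ab}\bar X_{cd}] = \delta_{ac}\delta_{bd}$, that is, that the primal/dual (white/black) conventions on the boxes are compatible with complex conjugation and that the ``row/column'' roles of the decorations do not get crossed when wires are drawn. Once this correspondence between decorations and Gaussian indices is fixed, the theorem reduces to a direct rewriting of Lemma \ref{lem:wick} together with the vanishing of same-type pairings for complex Gaussians.
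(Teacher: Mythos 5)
Your argument is correct: the paper itself gives no proof of this theorem, quoting it from the reference \cite{cn2}, and the coordinate expansion you describe --- Wick pairing of entries, vanishing of same-type pairings for standard complex Gaussians so that only bijections $X_i \leftrightarrow \bar X_{\alpha(i)}$ survive, and reinterpretation of the covariance deltas $\E[X_{ab}\bar X_{cd}]=\delta_{ac}\delta_{bd}$ as the new wires, with closed loops contributing dimension factors --- is precisely the standard argument given there. The only point worth making explicit is that Lemma \ref{lem:wick} as stated concerns real jointly Gaussian variables, so one should either apply it to real and imaginary parts or invoke the complex Wick formula (with vanishing pseudo-covariance) directly; this is routine and does not affect the conclusion.
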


\begin{figure}
\includegraphics{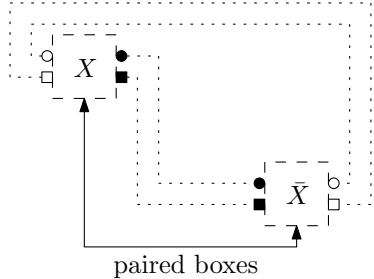}
\caption{Pairing of boxes in the Gaussian graphical calculus}
\label{fig:expectation_Gaussian}
\end{figure}

\begin{figure}
\includegraphics{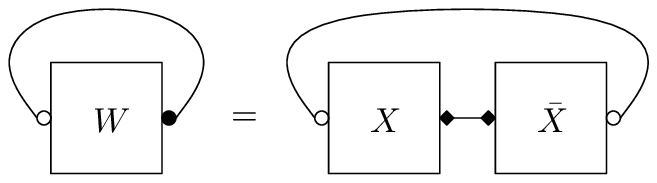}\\
\vspace{.5cm}
\includegraphics{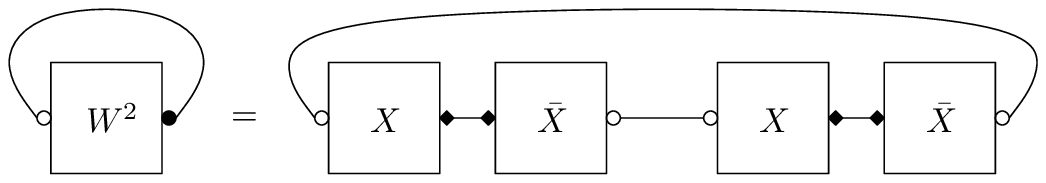}
\caption{Diagrams for the first two moments of a Wishart matrix. Round symbols correspond to $\C^d$ and diamond symbols correspond to $\C^s$.}
\label{fig:W-moments}
\end{figure}

We now present a simple example of moment computation that makes use of the Gaussian graphical calculus. Let $W \in M_{d}(\C)$ be a complex Wishart matrix of parameters $(d, s)$, that is $W = XX^*$ with $X \in M_{d \times s}(\C)$ with i.i.d.~ standard complex Gaussian entries. The diagrams for the first and the second moment of $W$ are presented in Figure \ref{fig:W-moments}. Since these diagrams contain only Gaussian boxes, the resulting expanded diagrams $\mathcal D_\alpha$ will contain only loops, so they will be scalars. For the first moment, the diagram contains only one pair of Gaussian matrices $X / \bar X$, hence the expected value of the trace is given by the following one term sum (see Figure \ref{fig:TrW})
$$\E \tr W = \sum_{\alpha \in \mathcal S_1} \mathcal D_\alpha = ds.$$

\begin{figure}
\includegraphics{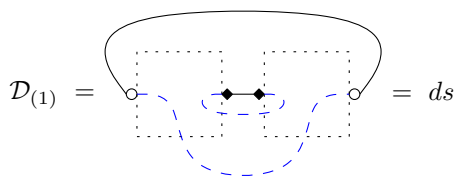}
\caption{Graphical expansion for the first moment of a Wishart matrix. There is only one term in the sum, corresponding to the unique permutation on one element.}
\label{fig:TrW}
\end{figure}

\begin{figure}
\includegraphics{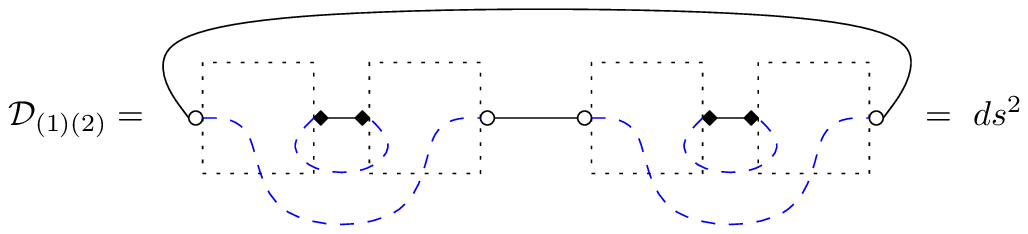}\\
\vspace{.5cm}
\includegraphics{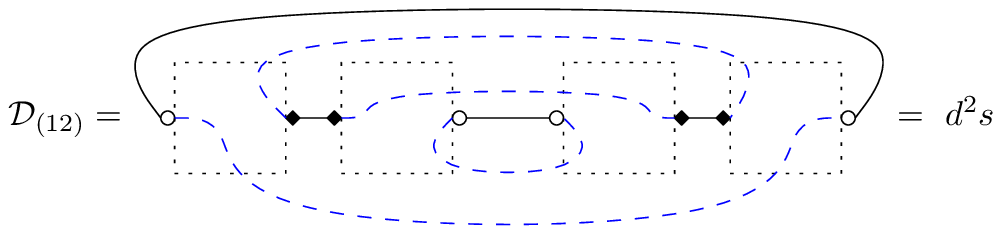}
\caption{Graphical expansion for the second moment of a Wishart matrix. There are two terms in the sum, corresponding to the identity permutation $(1)(2)$ and to the transposition $(12)$.}
\label{fig:TrW2}
\end{figure}

For the second moment, there are two pairs of Gaussian boxes, thus the formula in Theorem \ref{thm:Wick_diag} contains two terms, see Figure \ref{fig:TrW2}:
$$\E \tr W^2 = \sum_{\alpha \in \mathcal S_2} \mathcal D_\alpha = \mathcal D_{(1)(2)} + \mathcal D_{(12)} = ds^2 + d^2s.$$

\section{Moment formula for the singular values of a realigned Wishart matrix}

\label{sec:moments-wishart}

In this section we deduce a formula for realigned Wishart matrices, using Theorem \ref{thm:Wick_diag}. We are going to work in the more general setting of unbalanced tensor products. We consider Wishart matrices  $W \in M_{d_1}(\C) \otimes M_{d_2}(\C)$ of parameters $(d_1d_2, s)$, i.e.~ $W = XX^*$ with $X \in M_{d_1d_2 \times s}(\C)$ having i.i.d.~ $\mathcal N_\C(0,1)$ entries. 

Let $R = W^R \in M_{d_1^2 \times d_2^2}(\C)$ the realigned version of $W$, that is
$$R_{ij,kl} = W_{ik,jl}.$$ 

The diagram of the matrix $R$ is presented in Figure \ref{fig:R}.

\begin{figure}
\includegraphics{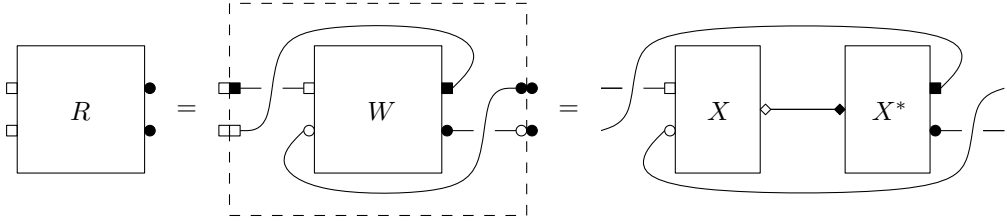}
\caption{The diagram of a realigned Wishart matrix. Square symbols correspond to $\C^{d_1}$, round symbols correspond to $\C^{d_2}$ and diamond-shaped labels correspond to $\C^s$.}
\label{fig:R}
\end{figure}

\begin{proposition}\label{prop:moments-RR*}
The moments of the random matrix $RR^*$ are given by
\begin{equation}\label{eq:moments-RR*}
\E \mathrm{Tr} \left[ (RR^*)^p \right] = \sum_{\alpha \in \mathcal S_{2p}} s^{\#\alpha} d_2^{\#(\alpha\gamma^{-1})} d_1^{\#(\alpha\delta^{-1})},
\end{equation}
where the permutations $\gamma, \delta \in \mathcal S_{2p}$ are given by
\begin{align*}
	\gamma &= (12)(34) \cdots (2p-1, 2p) \\
	\delta &= (1, 2p)(23)(45) \cdots (2p-2, 2p-1).
\end{align*}
\end{proposition}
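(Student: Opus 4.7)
The plan is to compute $\E\tr[(RR^*)^p]$ by drawing the corresponding diagram and applying the graphical Wick formula of Theorem \ref{thm:Wick_diag}. Expanding the trace produces $2p$ factors arranged in a cycle, alternating $R$ and $R^*$. Each $R = W^R$ contains one box $X$ and one box $\bar X$ linked internally by a $\C^s$ wire, and likewise for each $R^*$; so the diagram has $2p$ copies of $X$ and $2p$ copies of $\bar X$. Because $R \in M_{d_1^2 \times d_2^2}(\C)$, the trace wire inside an $RR^*$ pair (matching $R$'s column to $R^*$'s row) lives in $\C^{d_2} \otimes \C^{d_2}$, while the trace wire between successive pairs (cyclically, matching $R^*$'s column to the next $R$'s row) lives in $\C^{d_1} \otimes \C^{d_1}$. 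Each such wire splits into two sub-wires, one per tensor factor.

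Applying Theorem \ref{thm:Wick_diag} then gives a sum over $\alpha \in S_{2p}$. Each term $\mathcal{D}_\alpha$ is a diagram containing only closed loops in three colors and contributes $s^{L_s} d_1^{L_{d_1}} d_2^{L_{d_2}}$, where $L_V$ counts the number of $V$-colored loops. The whole proof reduces to counting these loops, and this is controlled by the pre-Wick pairings between $X$'s and $\bar X$'s in each color.

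Expanding $R_{ij,kl} = \sum_a X_{(ik),a} \bar X_{(jl),a}$ in factors of odd index and $R^*_{ij,kl} = \sum_a \bar X_{(ki),a} X_{(lj),a}$ in factors of even index lets me read off which of $X_m$ or $\bar X_m$ carries each sub-wire. Encoding each pre-Wick pairing as a permutation $\beta \in S_{2p}$, where $\beta(i)$ is the index of the $\bar X$-box connected by a sub-wire to the $X$-box labelled $i$, a direct check shows that the $\C^{d_2}$ sub-wires between factor $2m-1$ and factor $2m$ yield $\beta = (1,2)(3,4) \cdots (2p-1, 2p) = \gamma$; the $\C^{d_1}$ sub-wires between factor $2m$ and factor $2m+1$, read cyclically so that factor $2p+1 \equiv 1$, yield $\beta = (1, 2p)(2,3)(4,5) \cdots (2p-2, 2p-1) = \delta$; and the $\C^s$ internal wires of each factor trivially yield $\beta = \mathrm{id}$. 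A standard loop-counting argument then gives $L_V = \#(\alpha \beta^{-1})$: the superposition of the pre-Wick involution $\beta$ and the Wick permutation $\alpha$ forms a $2$-regular bipartite graph on the $4p$ boxes whose components project onto orbits of $\alpha^{-1}\beta$ acting on the $X$-boxes, and cycle count is invariant under inversion of the permutation. This produces the announced formula.

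The main obstacle is the bookkeeping for $\gamma$ and $\delta$: because realignment sends indices from row to column positions (and vice versa), and because the roles of $X$ and $\bar X$ effectively swap between $R$ and $R^*$, the sub-wire connections have opposite ``twists'' for the two types of trace wires. Drawing the explicit diagram for $p=2$ is essentially required to pin down the exact transpositions making up $\gamma$ and $\delta$.
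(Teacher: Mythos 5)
Your proposal is correct and follows essentially the same route as the paper: expand $\tr[(RR^*)^p]$ into a diagram with $2p$ boxes $X$ and $2p$ boxes $\bar X$, apply the graphical Wick formula of Theorem \ref{thm:Wick_diag}, and count the loops in each $\mathcal D_\alpha$ by comparing the Wick pairing $\alpha$ with the initial wirings, which are $\mathrm{id}$ for the $\C^s$ wires, $\gamma$ for the $\C^{d_2}$ wires, and $\delta$ for the $\C^{d_1}$ wires. Your explicit index bookkeeping for $R$ and $R^*$ and the justification of the loop count $\#(\alpha\beta^{-1})$ simply spell out details the paper leaves to the figures.
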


\begin{figure}
\includegraphics{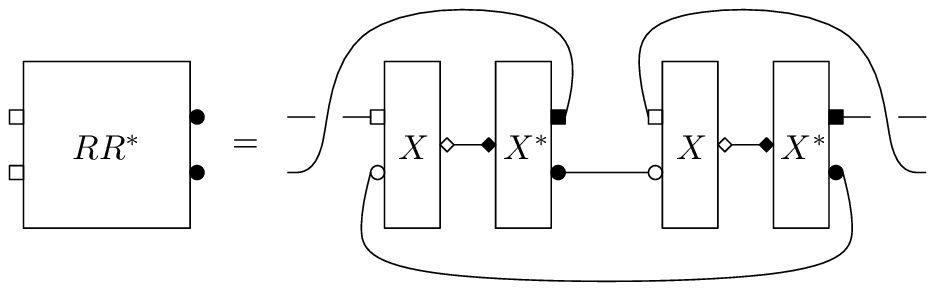}\\
\vspace{.5cm}
\includegraphics{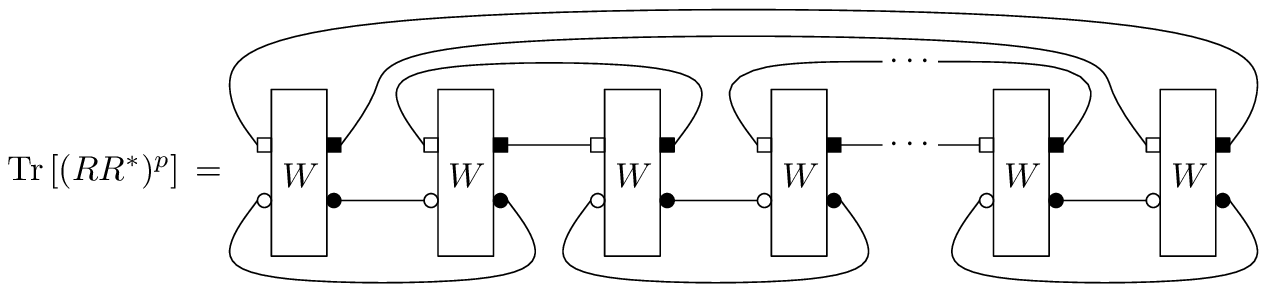}
\caption{The diagrams of the product $RR^*$ and of the $p$-th moment of $RR^*$.}
\label{fig:RRstar}
\end{figure}

\begin{proof}
In Figure \ref{fig:RRstar} we represent the matrix $RR^*$ and its $p$-th moment. The second diagram contains $2p$ $W$ boxes, each $W$ box being represented in Figure \ref{fig:R}. We use Theorem \ref{thm:Wick_diag} to compute the average
$$\E \mathrm{Tr} \left[ (RR^*)^p \right] = \sum_{\alpha \in \mathcal S_{2p}} \mathcal D_\alpha,$$
where $D_\alpha$ is the diagram obtained after the removal of the $X$ and $X^*$ boxes and connecting the $i$-th $X$ box with the $\alpha(i)$-th $X^*$ box. Since the random boxes are the only tensors appearing in the diagram, each $D_\alpha$ will contain only loops that can be counted in the following way:
\begin{enumerate}
	\item There are $\#\alpha$ loops coming from squared labels (associated to $\C^s$), because the initial wiring
	of these labels is given by the identity permutation.
	\item There are $\#(\alpha\gamma^{-1})$ loops coming from the lower round labels (associated to $\C^{d_2}$). The initial wiring of these labels is given by the permutation
	$$\gamma = (12)(34) \cdots (2p-1, 2p).$$
	\item There are $\#(\alpha\delta^{-1})$ loops coming from the upper square labels (associated to $\C^{d_1}$). The initial wiring of these labels is given by the permutation
	$$\delta = (1, 2p)(23)(45) \cdots (2p-2, 2p-1).$$
\end{enumerate}
Taking into account all contributions, we obtain the announced moment formula for $RR^*$.
\end{proof}

\section{Proof of Theorem \ref{theo:quartercircle}} \label{sec:proof-moments}

Recall that in Theorem \ref{theo:quartercircle} we are considering the balanced case, $d_1=d_2=d$. Note that
$$QQ^* = d^{-2}s^{-1}(RR^* - dsRE_d - dsE_dR^* + d^2s^2E_d),$$
so that one can expand the $p$-th moment as
$$\E \mathrm{Tr} \left[ (QQ^*)^p \right] = d^{-2p}s^{-p} \sum_{f_{1,2}:[p] \to \{0,1\}} (-1)^{\sum_i (f_1(i)+f_2(i))} \E \mathrm{Tr} \prod_{i=1}^p \left( F_1(i)F_2(i) \right),$$
where 
$$F_1(i) = \begin{cases}
R \quad &\text{ when } \quad f_1(i)=0,\\
dsE_d \quad &\text{ when } \quad f_1(i)=1,
\end{cases}$$
and 
$$F_2(i) = \begin{cases}
R^* \quad &\text{ when } \quad f_2(i)=0,\\
dsE_d \quad &\text{ when } \quad f_2(i)=1.
\end{cases}$$
We are going to use now a trick that will allow us to compute the expected value in the general term above in the same manner as we did for $RR^*$ in Proposition \ref{prop:moments-RR*}. The idea, presented graphically in Figure \ref{fig:dsREd}, is that when one uses the graphical expansion formula for the expected value above, it is as if we had only $RR^*$ terms, but the set of permutations we allow is restricted to 
$$\mathcal S_{2p}(f_1,f_2) = \{\alpha \in \mathcal S_{2p} \, | \, \forall i \in f_1^{-1}(1), \, \alpha(2i-1) = 2i-1 \text{ and } \forall i \in f_2^{-1}(1), \, \alpha(2i) = 2i\}.$$
\begin{figure}
\includegraphics{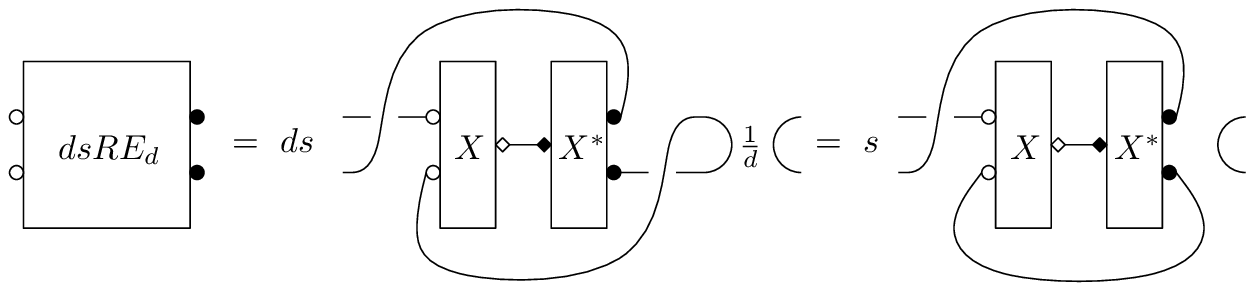}\\
\vspace{.5cm}
\includegraphics{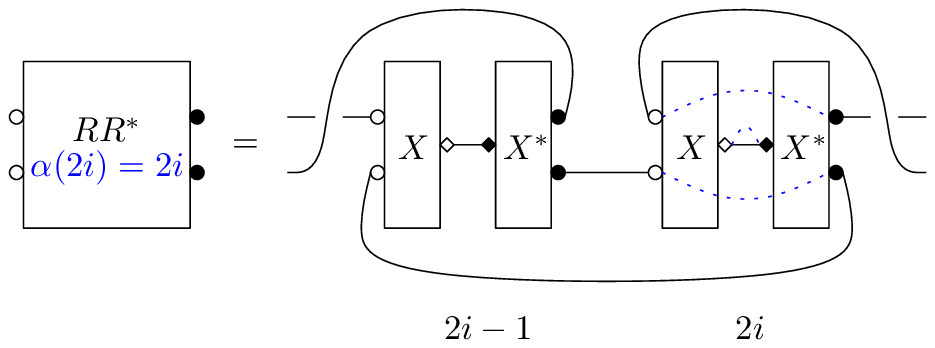}
\caption{The diagrams of $dsRE_d$ and of $RR^*$, with the constraint $\alpha(2i)=2i$ are the same.}
\label{fig:dsREd}
\end{figure}
Using the result in Proposition \ref{prop:moments-RR*}, we obtain
\begin{align*}
\E \mathrm{Tr} \left[ (QQ^*)^p \right] &= d^{-2p}s^{-p} \sum_{f_{1,2}:[p] \to \{0,1\}} (-1)^{\sum_i (f_1(i)+f_2(i))} \sum_{\alpha \in \mathcal S_{2p}(f_1,f_2)} s^{\#\alpha} d^{\#(\alpha\gamma^{-1})} d^{\#(\alpha\delta^{-1})}\\
&= d^{-2p}s^{-p} \sum_{\alpha \in \mathcal S_{2p}} s^{\#\alpha} d^{\#(\alpha\gamma^{-1})} d^{\#(\alpha\delta^{-1})}\sum_{(f_1,f_2) \in \mathcal F(\alpha)} (-1)^{\sum_i (f_1(i)+f_2(i))}. 
\end{align*}
Note that in the last equality we inverted the summation order, so we had to restrict the set of choice functions $f_{1,2}$ to the family
$$\mathcal F(\alpha) = \{ (f_1,f_2) \, | \, \alpha(2i-1)\neq 2i-1 \implies f_1(i)=0 \text{ and }  \alpha(2i)\neq 2i \implies f_2(i)=0 \}.$$
Let us show now that whenever a permutation $\alpha \in \mathcal S_{2p}$ has a fixed point $i_0$, the corresponding sum 
\begin{equation}\label{eq:sum-f12}
\sum_{(f_1,f_2) \in \mathcal F(\alpha)} (-1)^{\sum_i (f_1(i)+f_2(i))}
\end{equation}
equals zero. For such a permutation and an element $f=(f_1,f_2) \in \mathcal F(\alpha)$, define another pair $\tilde f = (\tilde f_1, \tilde f_2)$ as follows. If $i_0$ is odd, $i_0 = 2j_0-1$, then put $\tilde f_2 = f_2$ and 
$$\tilde f_1(j) = \begin{cases}
1-f_1(j) \quad &\text{ if } \quad j=j_0,\\
f_1(j) \quad &\text{ if } \quad j \neq j_0.
\end{cases}$$
For even $i_0 = 2j_0$, define $\tilde f_1 = f_1$ and 
$$\tilde f_2(j) = \begin{cases}
1-f_2(j) \quad &\text{ if } \quad j=j_0,\\
f_2(j) \quad &\text{ if } \quad j \neq j_0.
\end{cases}$$
Since $i_0$ is a fixed point of $\alpha$, we have $\tilde f = (\tilde f_1, \tilde f_2) \in \mathcal F(\alpha)$ and the map $f \to \tilde f$ is thus an involution without fixed points acting on $F(\alpha)$. Notice also that changing a single value in a pair $(f_1,f_2)$ changes the parity of the sum $\sum_i (f_1(i)+f_2(i))$. This concludes our argument that the sum \eqref{eq:sum-f12} is null whenever $\alpha$ has a fixed point.

We have thus shown that permutations with fixed points cancel each other out, so we have
$$\E \mathrm{Tr} \left[ (QQ^*)^p \right] =  d^{-2p}s^{-p} \sum_{\alpha \in \mathcal S_{2p}^o} s^{\#\alpha} d^{\#(\alpha\gamma^{-1})} d^{\#(\alpha\delta^{-1})}\sum_{(f_1,f_2) \in \mathcal F(\alpha)} (-1)^{\sum_i (f_1(i)+f_2(i))},$$
where we denote by $\mathcal S_{2p}^o$ the set of permutations of $[2p]$ without fixed points. For such a permutation $\alpha$, the set of admissible choices $\mathcal F(\alpha)$ contains only one element $f=(f_1,f_2)$ with $f_1(i)=f_2(i)=0$ in such a way that the above formula simplifies to
\begin{equation}\label{eq:sum-QQ*}
\E \mathrm{Tr} \left[ (QQ^*)^p \right] =  \sum_{\alpha \in \mathcal S_{2p}^o} d^{-2p}s^{-p} s^{\#\alpha} d^{\#(\alpha\gamma^{-1})} d^{\#(\alpha\delta^{-1})}.
\end{equation}

For small values of $p$, we obtain
\begin{equation} \label{eq:p-equals-1} \E \mathrm{Tr} \left[ QQ^* \right] = d^2 ,\end{equation}
\begin{equation} \label{eq:p-equals-2} \E \mathrm{Tr} \left[ (QQ^*)^2 \right] = 2d^2 + 2s^{-1}d^2 + 1 + 4s^{-1} .
\end{equation}

We show next that the dominating term in the sum above is of the order $d^2$ and that it is given by permutations $\alpha$ which are non-crossing pair partitions of $[2p]$. Since there are $\mathrm{Cat}_p$ such permutations, we obtain
\begin{equation}\label{eq:moment-QQ*}
\E \mathrm{Tr} \left[ (QQ^*)^p \right] =  d^2 \mathrm{Cat}_p (1+o(1)),
\end{equation}
which is the moment formula we aimed for.

Let us consider separately the exponents of $d$ and $s$ in the general term of the sum \eqref{eq:sum-QQ*}, 
\begin{align*}
g(\alpha) &= -2p+\#(\alpha\gamma^{-1}) + \#(\alpha\delta^{-1}) = 2p - (|\alpha\gamma^{-1}| + |\alpha\delta^{-1}|)\\
h(\alpha) &= -p + \#\alpha = p-|\alpha|.
\end{align*}
Using the the triangular inequality and the fact that $\alpha$ has no fixed points, we obtain
\begin{align*}
|\alpha\gamma^{-1}| + |\alpha\delta^{-1}| &\geq |\gamma\delta| = |(2p-1 \, 2p-3 \cdots 5 3 1)(2 4 6 \cdots 2p)| = 2p-2,\\
|\alpha| &\geq p,
\end{align*}
which shows that we have indeed $g(\alpha) \leq 2$ and $h(\alpha) \leq 0$. In order to conclude, it remains to be shown that the permutations which saturate both inequalities are exactly the non-crossing pair partitions of $[2p]$. The fact that $\alpha$ has no fixed points and that $|\alpha| = p$ implies that $\alpha$ is indeed a product of $p$ disjoint transpositions i.e. a pair partition. To show that it is non-crossing, we start from the geodesic condition $|\alpha\gamma^{-1}| + |\alpha\delta^{-1}| = |\gamma\delta| = |(2p-1 \, 2p-3 \cdots 5 3 1)(2 4 6 \cdots 2p)|$. This implies that the permutation $\gamma \alpha$ lies on the geodesic $\mathrm{id} \to \gamma \delta$. More precisely, we can write $\alpha = \gamma \Pi_o \Pi_e$ where $\Pi_o$ and $\Pi_e$ are permutations acting on the odd, respectively even elements of $[2p]$. The geodesic condition implies that these permutations come from non-crossing partitions $\pi_o, \pi_e \in NC(p)$:
\begin{align*}
\Pi_o(2i-1) &= 2\pi_o^{-1}(i)-1,\\
\Pi_o(2i) &= 2i,\\
\Pi_e(2i-1) &= 2i-1,\\
\Pi_e(2i) &= 2\pi_e(i).
\end{align*}
The fact that $\alpha$ is an involution easily implies $\pi_e = \pi_o =: \pi \in NC(p)$ so that the action of $\alpha$ is given by
\begin{align*}
\alpha(2i-1) &= 2\pi^{-1}(i),\\
\alpha(2i) &= 2\pi(i)-1.
\end{align*}
This is equivalent to $\alpha = \mathrm{fat}(\pi)$ so that $\alpha$ is necessarily a non-crossing pair partition.

\bigskip

Let us now prove the second statement in the theorem, by giving an estimate on the second moment of the random variable $\mathrm{Tr} \left[ (QQ^*)^p \right]$.

Using Theorem \ref{thm:Wick_diag} for the diagram of $\E \mathrm{Tr}^2 \left[ (QQ^*)^p \right]$, which is made of two disconnected copies of the bottom diagram in Figure \ref{fig:RRstar} we obtain the moment expansion
\begin{align*}
\E \mathrm{Tr}^2 \left[ (QQ^*)^p \right] &= d^{-4p}s^{-2p} \sum_{f_{1,2}:[2p] \to \{0,1\}} (-1)^{\sum_i (f_1(i)+f_2(i))}  \cdot \\
&\qquad \qquad \cdot \E \left[\mathrm{Tr} \left(\prod_{i=1}^p \left( F_1(i)F_2(i) \right)\right) \left(\mathrm{Tr}\prod_{i=p+1}^{2p} \left( F_1(i)F_2(i) \right)\right)\right] \\
&= d^{-4p}s^{-2p} \sum_{\alpha \in \mathcal S_{4p}} s^{\#\alpha} d^{\#(\alpha\gamma_{12}^{-1})} d^{\#(\alpha\delta_{12}^{-1})}\sum_{(f_1,f_2) \in \mathcal F(\alpha)} (-1)^{\sum_i (f_1(i)+f_2(i))}. 
\end{align*}
where the $F$ and $f$ functions have the same meaning as before and the permutations $\gamma_{12},\delta_{12}$ are defined by
\begin{align*}
	\gamma_{12} &= (12)(34) \cdots (2p-1, 2p) (2p+1, 2p+2) \cdots (4p-1, 4p)\\
	\delta_{12} &= (1, 2p)(23)(45) \cdots (2p-2, 2p-1)(2p+1, 4p)(2p+2,2p+3) \cdots (4p-2, 4p-1).
\end{align*}

We can show, by the same technique as before, that permutations $\alpha$ with fixed points cancel each other out in the sum above, so we have
$$\E \mathrm{Tr}^2 \left[ (QQ^*)^p \right] =  d^{-4p}s^{-2p} \sum_{\alpha \in \mathcal S_{4p}^o} s^{\#\alpha} d^{\#(\alpha\gamma_{12}^{-1})} d^{\#(\alpha\delta_{12}^{-1})}
$$

We investigate next the dominating term in the sum above. The exponents of $d$ and $s$ in the general term read
\begin{align*}
g_{12}(\alpha) &= -4p+\#(\alpha\gamma_{12}^{-1}) + \#(\alpha\delta_{12}^{-1}) = 4p - (|\alpha\gamma_{12}^{-1}| + |\alpha\delta_{12}^{-1}|)\\
h_{12}(\alpha) &= -2p + \#\alpha = 2p-|\alpha|.
\end{align*}
Using the the triangular inequality and the fact that $\alpha$ has no fixed points, we obtain
\begin{align*}
|\alpha\gamma_{12}^{-1}| + |\alpha\delta_{12}^{-1}| &\geq |\gamma_{12}\delta_{12}| = 4p-4,\\
|\alpha| &\geq 2p,
\end{align*}
which shows that we have $g_{12}(\alpha) \leq 4$ and $h_{12}(\alpha) \leq 0$. This proves that the sum behaves like $d^4$ times a constant. To evaluate this constant, we need to find the set of permutations $\alpha$ which saturate the above inequalities. As before, this set is made of pair partitions such that $\gamma_{12} \alpha$ lies on the geodesic $\mathrm{id} \to \gamma_{12} \delta_{12}$. Recall that both permutations $\gamma_{12}$ and $\delta_{12}$ have a product structure
\begin{align*}
\gamma_{12} &= \gamma_1 \gamma_2\\
\delta_{12} &= \delta_1 \delta_2,
\end{align*}
where $\gamma_1, \delta_1$ act on $[2p]$ and $\gamma_2,\delta_2$ act on $2p+[2p]$. Hence, mimicking the reasoning in the first part of the proof, the permutations we want are those having also a product structure $\alpha = \alpha_1 \alpha_2$, where $\alpha_1$ and $\alpha_2$ are non-crossing pair partitions of $[2p]$ and $2p+[2p]$ respectively. Since one can choose $\alpha_1$ and $\alpha_2$ independently, we conclude that 
$$\E \mathrm{Tr}^2 \left[ (QQ^*)^p \right] = d^{4}\left( \mathrm{Cat}_p^2 + o(1) \right),$$
which, together with \eqref{eq:moment-QQ*}, achieves the proof of the theorem.

Note that the function $g_{12}(\alpha)$ takes only even values, and therefore permutations $\alpha$ such that $g_{12}(\alpha)<4$ must actually satisfy $g_{12}(\alpha) \leq 2$. This remark yields a bound on the variance
\begin{equation} \label{eq:upper-bound-variance} \Var \tr [(QQ^*)^p] \leq C_pd^{2} ,\end{equation}
where $C_p$ is a constant depending only on $p$.


\section{Realigning states in an unbalanced tensor product}\label{sec:unbalanced}

We analyze now an unbalanced tensor product $\C^{d_1} \otimes \C^{d_2}$, with $d_1 < d_2$. We consider the asymptotic regime where $d_1$ is fixed and $d_2 \to \infty$ and we show that the threshold occurs at a \emph{finite} value of the parameter $s$, more precisely $s=d_1^2$.

\begin{theorem} \label{thm:threshold-realignment-unbalanced}
For every integers $d_1,s$, there are constants $C=C(d_1,s)$ and $c=c(d_1,s)$ such as the following holds. Let $\rho$ be a random state on $\C^{d_1} \otimes \C^{d_2}$ with distribution $\mu_{d_1d_2,s}$, then
\begin{enumerate}
 \item If $s<d_1^2$, then 
 \[ \P( \|\rho^R\| >1) \geq 1-C \exp ( -c d_2^{1/4} ). \] 
 \item If $s > d_1^2$, then
 \[ \P( \|\rho^R\| \leq 1) \geq 1-C \exp ( -c d_2^{1/4}). \] 
\end{enumerate}
\end{theorem}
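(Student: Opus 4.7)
The strategy mimics the derivation of Theorem \ref{thm:threshold-realignment} from Theorem \ref{theo:quartercircle}, replacing the quarter-circle random-matrix input by an unbalanced analog. Writing $\rho = W/\tr W$ with $W = W_{d_1d_2,s}$ Wishart and $R = W^R \in M_{d_1^2 \times d_2^2}(\C)$, one has $\rho^R = R/\tr W$; by Lemma \ref{lemma:chi2}, $\tr W = d_1 d_2 s(1+o(1))$ up to an event of exponentially small probability, so the task reduces to showing $\|R\|_1 \approx d_1^2 \sqrt{s}\, d_2$. Note that in this regime the ``rank-one mean'' $\E R = s\,\Id^R$ has operator norm $s\sqrt{d_1d_2}$, which is small compared to the typical singular value $\sqrt{s}\,d_2$ we are aiming for; hence, unlike the balanced case, no centering is needed.

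The random-matrix core is a moment asymptotic obtained by specializing Proposition \ref{prop:moments-RR*}: with $d_1,s$ fixed and $d_2 \to \infty$, in the expansion
\[ \E\tr[(RR^*)^p] = \sum_{\alpha \in \mathcal S_{2p}} s^{\#\alpha} d_2^{\#(\alpha\gamma^{-1})} d_1^{\#(\alpha\delta^{-1})} , \]
the exponent of $d_2$ is at most $2p$, and equality occurs only for $\alpha = \gamma$. This unique leading term contributes $s^p d_2^{2p} d_1^2$ (using $\#\gamma=p$ and $\#(\gamma\delta)=2$), while every other permutation is penalized by at least one factor of $d_2$. Hence $d_1^{-2}(sd_2^2)^{-p}\E\tr[(RR^*)^p] = 1 + O(d_2^{-1})$, i.e.\ the empirical singular value distribution of $R/(\sqrt{s}\,d_2)$ converges in moments to the Dirac mass $\delta_1$. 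A parallel variance estimate, obtained from the same computation applied to $\E\tr^2[(RR^*)^p]$, should give $\Var\tr[(RR^*)^p] = O(d_2^{4p-1})$, so that $\sqrt{\Var}/\E = O(d_2^{-1/2})$ for both $p=1$ and $p=2$.

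To turn this into probabilistic bounds, I would follow Proposition \ref{prop:unsharp-threshold}: apply the Gaussian polynomial concentration inequality to $\|R\|_2^2$ and $\|R\|_4^4$ (polynomials of degrees $4$ and $8$ in the Gaussian entries of $X$). Taking $t = \e\sqrt{d_2}$ in the degree-$4$ bound yields an exponent $t^{2/4} = \sqrt{\e}\,d_2^{1/4}$, which is the source of the $\exp(-c d_2^{1/4})$ rate in the statement. The H\"older sandwich $\|R\|_2^3/\|R\|_4^2 \leq \|R\|_1 \leq d_1\|R\|_2$ is then asymptotically tight, both sides equaling $d_1^2\sqrt{s}\,d_2(1+o(1))$ on the typical event. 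Combining with the control of $\tr W$ gives $\|\rho^R\|_1 = (d_1/\sqrt{s})(1+o(1))$ with probability at least $1 - C\exp(-cd_2^{1/4})$, and the inequalities $d_1/\sqrt{s} > 1$ (resp.\ $<1$) correspond precisely to $s < d_1^2$ (resp.\ $s > d_1^2$).

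The main technical obstacle is the combinatorial bookkeeping for the variance: one must verify that in the $\mathcal S_{4p}$ expansion corresponding to $\E\tr^2$, no permutation other than the disconnected $\gamma_{12}$ attains the maximal $d_2$-exponent, so that the variance is genuinely suppressed by $d_2^{-1}$ relative to the squared mean. A secondary issue is the small-$s$ regime (in particular $s < d_1$, where $W$ is rank-deficient), but since the moment formula is a polynomial identity in $s$ all arguments carry over unchanged.
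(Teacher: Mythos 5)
Your proposal is correct and follows essentially the same route as the paper's proof: the moment formula of Proposition \ref{prop:moments-RR*} with the unique dominant permutation $\alpha=\gamma$ contributing $s^p d_2^{2p} d_1^2$, a variance bound from the $\mathcal S_{4p}$ expansion, Gaussian polynomial concentration applied to $\|R\|_2^2$ and $\|R\|_4^4$ with $t\asymp\sqrt{d_2}$ (yielding the $\exp(-cd_2^{1/4})$ rate), the H\"older sandwich, and concentration of $\tr W$, with no centering needed. As a side remark, your variance order $O(d_2^{4p-1})$ is actually the correct one (block-mixing $\alpha$ with $\alpha\gamma_{12}^{-1}$ equal to a cross-block transposition contribute positively at order $d_2^{4p-1}$), whereas the paper asserts the slightly stronger $O(d_2^{4p-2})$; either bound suffices for the concentration argument.
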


As in the balanced case, the result is based on a moment computation for a realigned Wishart matrix.

\begin{theorem}\label{thm:unbalanced-finite}
In the regime of fixed $d_1,s$ and $d_2 \to \infty$, the empirical singular value distribution of  $d_2^{-1}R$ converges in moments to a Dirac mass at $\sqrt s$. Moreover, the variances of the moments of the random matrix $d_2^{-1}R$ satisfy
$$\Var \tr \left[ (d_2^{-2}RR^*)^p \right] = O(1/d_2^2).$$
\end{theorem}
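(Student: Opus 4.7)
The proof has two parts: convergence of the expected moments and an upper bound on the variance.

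\emph{Moments.} Apply Proposition \ref{prop:moments-RR*} to write
\[
\E \tr[(d_2^{-2}RR^*)^p] = \sum_{\alpha \in \mathcal S_{2p}} s^{\#\alpha}\, d_2^{-|\alpha\gamma^{-1}|}\, d_1^{\#(\alpha\delta^{-1})}.
\]
With $d_1,s,p$ fixed and $d_2 \to \infty$, the $d_2$-exponent is non-positive, vanishing only at $\alpha = \gamma$. Using $|\gamma\delta^{-1}| = 2p-2$ (from the balanced proof), $\#(\gamma\delta^{-1}) = 2$ and $\#\gamma = p$, giving $\E \tr[(d_2^{-2}RR^*)^p] \to s^p d_1^2$. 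Dividing by $d_1^2$ shows that the $2p$-th moment of the empirical singular value distribution of $d_2^{-1}R$ converges to $s^p$; this identifies the limit measure as $\delta_{\sqrt s}$ via moment convergence of its $x \mapsto x^2$ pushforward on $\R_+$, from which convergence of odd moments follows by boundedness.

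\emph{Variance setup.} The analogous double-trace expansion (two disjoint copies of the single-trace diagram) yields
\[
\E\tr^2[(RR^*)^p] = \sum_{\alpha \in \mathcal S_{4p}} s^{\#\alpha}\, d_2^{\#(\alpha\gamma_{12}^{-1})}\, d_1^{\#(\alpha\delta_{12}^{-1})},
\]
with $\gamma_{12} = \gamma_1\gamma_2$ and $\delta_{12} = \delta_1\delta_2$ block-diagonal on $[2p] \sqcup (2p+[2p])$, exactly as in Section \ref{sec:proof-moments}. Permutations $\alpha = \alpha_1 \oplus \alpha_2$ preserving this decomposition factor multiplicatively and contribute exactly $(\E\tr[(RR^*)^p])^2$, so
\[
\Var\tr[(RR^*)^p] = \sum_{\alpha \text{ connected}} s^{\#\alpha}\, d_2^{\#(\alpha\gamma_{12}^{-1})}\, d_1^{\#(\alpha\delta_{12}^{-1})},
\]
where ``connected'' means $\alpha$ does not preserve the block structure. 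With $s,d_1$ fixed, the decay rate of $\Var\tr[(d_2^{-2}RR^*)^p] = d_2^{-4p}\Var\tr[(RR^*)^p]$ is governed by the maximum of $\#(\alpha\gamma_{12}^{-1})$ over connected $\alpha$.

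\emph{Variance rate and main obstacle.} The stated $O(1/d_2^2)$ bound amounts to the combinatorial claim $\#(\alpha\gamma_{12}^{-1}) \leq 4p-2$ for every connected $\alpha$. The main obstacle is that the naive triangle inequality yields only $\#(\alpha\gamma_{12}^{-1}) \leq 4p-1$, which is saturated by $\alpha = \tau\gamma_{12}$ with $\tau = (i, 2p+j)$ a transposition linking the two blocks; such $\alpha$ are connected and would contribute $\Theta(d_2^{4p-1})$. Upgrading the bound by one is the crux. The plan combines (i) the parity identity inherited from the balanced proof, namely that $|\alpha\gamma_{12}^{-1}|$ and $|\alpha\delta_{12}^{-1}|$ have the same parity (both equal to that of $\mathrm{sgn}(\alpha)$, since $\gamma_{12}$ and $\delta_{12}$ are products of an even number of transpositions), and (ii) the sharpened triangle inequality $|\alpha\gamma_{12}^{-1}| + |\alpha\delta_{12}^{-1}| \geq |\gamma_{12}\delta_{12}^{-1}| = 4p-4$ (valid for $p \geq 2$). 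Combined, these should show that any connected $\alpha$ with $|\alpha\gamma_{12}^{-1}| = 1$ forces a $\delta_{12}$-cycle structure incompatible with the joint parity of $(|\alpha\gamma_{12}^{-1}|, |\alpha\delta_{12}^{-1}|)$, so in fact $|\alpha\gamma_{12}^{-1}| \geq 2$ for every connected $\alpha$, producing the announced $O(1/d_2^2)$ rate.
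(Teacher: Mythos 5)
Your computation of the expected moments is correct and is essentially the paper's own argument: the only $\alpha\in\mathcal S_{2p}$ with non-negative $d_2$-exponent is $\alpha=\gamma$, and $\#\gamma=p$, $\#(\gamma\delta^{-1})=2$ give the limit $s^pd_1^2$. The genuine problem is the variance bound, and you have put your finger on exactly the right spot --- but the obstacle you identify cannot be overcome, because the inequality you are trying to prove is false. The variance is the sum over \emph{connected} $\alpha\in\mathcal S_{4p}$ of the terms $s^{\#\alpha}d_2^{\#(\alpha\gamma_{12}^{-1})}d_1^{\#(\alpha\delta_{12}^{-1})}$, and every one of these terms is \emph{positive}, so no cancellation is available. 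The permutations $\alpha=\tau\gamma_{12}$ with $\tau=(i,2p+j)$ a cross-block transposition are connected and satisfy $\#(\alpha\gamma_{12}^{-1})=4p-1$, hence they contribute $\Theta(d_2^{4p-1})$. Your proposed parity rescue fails for a structural reason: the parity identity couples $|\alpha\gamma_{12}^{-1}|$ to $|\alpha\delta_{12}^{-1}|$, i.e.\ the exponent of $d_2$ to the exponent of $d_1$; since $d_1$ is a fixed constant, forcing $|\alpha\delta_{12}^{-1}|$ to be odd costs nothing and yields no contradiction. For $p=1$ everything can be checked by hand: $\gamma_{12}=\delta_{12}=(12)(34)$, the four connected $\alpha$ with $\alpha\gamma_{12}^{-1}\in\{(13),(14),(23),(24)\}$ are $4$-cycles contributing $4s(d_1d_2)^3$ in total; equivalently, $\tr(RR^*)=\tr(W^2)$ and a direct Wick computation gives $\Var\tr(W^2)=4s(d_1d_2)^3+O(d_2^2)$.

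The correct conclusion is therefore $\Var\tr\left[(d_2^{-2}RR^*)^p\right]=\Theta(1/d_2)$, not $O(1/d_2^2)$. You should know that the paper's own proof is no better at this point: it asserts without justification that ``the largest remaining terms correspond to permutations $\alpha$ with $\#(\alpha\gamma_{12}^{-1})=4p-2$'', which is contradicted by the permutations above. The error is harmless downstream, since the proof of Theorem \ref{thm:threshold-realignment-unbalanced} only needs the variance to tend to zero (and in fact invokes Gaussian polynomial concentration rather than Chebyshev). So the honest fix is the one already contained in your setup: for every connected $\alpha$ one has $\#(\alpha\gamma_{12}^{-1})\leq 4p-1$, and since $s$ and $d_1$ are bounded this gives $\Var\tr\left[(d_2^{-2}RR^*)^p\right]=O(1/d_2)$; the exponent $2$ in the statement should be replaced by $1$.
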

\begin{proof}
We need to prove that for every integer $p$,
\[ \lim_{d_2 \to \iy} \frac{1}{d_1^2} \E \tr \left[ (d_2^{-2}RR^*)^p \right] = s^{p/2} .\]

As before, we start from the moment formula \eqref{eq:moments-RR*}
$$\E \mathrm{Tr} \left[ (RR^*)^p \right] = \sum_{\alpha \in \mathcal S_{2p}} s^{\#\alpha} d_2^{\#(\alpha\gamma^{-1})} d_1^{\#(\alpha\delta^{-1})}.$$
Since the only parameter growing to infinity in the above sum is $d_2$, the dominating term is given by the permutation $\alpha=\gamma$ and thus
\begin{equation}\label{eq:dominant-unbalanced}
\E \mathrm{Tr} \left[ (RR^*)^p \right] \sim d_2^{2p} s^p d_1^2,
\end{equation}
showing the convergence in moments of the empirical singular value distribution.

The statement about the variance follows readily from the following formula
$$\E \mathrm{Tr}^2 \left[ (RR^*)^p \right] = \sum_{\alpha \in \mathcal S_{4p}} s^{\#\alpha} d_2^{\#(\alpha\gamma_{12}^{-1})} d_1^{\#(\alpha\delta_{12}^{-1})}.$$
We note that the dominating term is given by $\alpha = \gamma_{12}$ and that it cancels out with the square of the right hand side of equation \eqref{eq:dominant-unbalanced}. The largest remaining terms correspond to permutations $\alpha$ with $\#(\alpha\gamma_{12}^{-1})=4p-2$.
\end{proof}

We now move on to the proof of Theorem \ref{thm:threshold-realignment-unbalanced}, which mimics the one of Proposition \ref{prop:unsharp-threshold}. This approach gives the exact threshold in the unbalanced case because the limiting measure is a Dirac mass, so the bounds given by the moments 2 and 4 are already tight. 

\begin{proof}[Proof of Theorem \ref{thm:threshold-realignment-unbalanced}] 
In the present case, equation \eqref{eq:holder} reads 
$$\frac{\|R\|_2^3}{\|R\|^2_4} \leq \|R\|_1 \leq d_1 \|R\|_2 .$$

Using the previous proposition and the concentration for Gaussian polynomials (lemma \ref{lemma:norms}), we can find constants $c,C$ such that for every $t>0$,
%
\begin{align*}
\P(|\|R\|_2^2- d_2^2d_1^2s(1+o(1))|>Ctd_2)&\leq C \exp(-c t^{1/2})\\
\P(|\|R\|_4^4- d_2^2d_1^2s^2(1+o(1))|>Ctd_2)&\leq C \exp(-c t^{1/4}).
\end{align*}

We choose $t=\eta d_2$ for some $\eta > 0$. The previous two facts imply that with large probability, $\|(d_1d_2s)^{-1}R\|_1$ is close to $d_1/\sqrt{s}$ (the difference being smaller than any fixed $\e >0$, for an appropriate choice of $\eta$). Finally, one can replace $R/(d_1d_2s)$ by $\rho^R$, by using the fact that the trace of the Wishart matrix $W$ concentrates around its mean $d_1d_2s$ (see Lemma \ref{lemma:chi2}).
\end{proof}

\end{document}